\newcommand\BibTeX{{\rmfamily B\kern-.05em \textsc{i\kern-.025em b}\kern-.08em
T\kern-.1667em\lower.7ex\hbox{E}\kern-.125emX}}
\newenvironment{psmallmatrix}{\left[\begin{smallmatrix}}{\end{smallmatrix}\right]}
\renewenvironment{pmatrix}{\begin{psmallmatrix}}{\end{psmallmatrix}}
\newtheorem{theorem}{Theorem}[section]
\newtheorem{lemma}[theorem]{Lemma}
\theoremstyle{definition}
\newtheorem*{assumption}{Assumption}
\theoremstyle{remark}
\newtheorem{remark}[theorem]{Remark}
\newcommand{\diag}{{\rm diag}}
\def \uu {{\mathbf u}}
\def \vv {{\mathbf v}}
\begin{document}


\title{Semi-Decentralized Approximation of Optimal Control of Distributed Systems Based on a Functional Calculus}

\author{Y.~Yakoubi and M. ~Lenczner \\ Department Time-Frequency, FEMTO-ST Institute, \\ 26, Rue de
l'Epitaphe, 25030 Besan\c{c}on, FRANCE.}
%
%





\begin{abstract}
This paper discusses a new approximation method for operators which
are solution to an operational Riccati equation (\textbf{ORE}). The
latter is derived from the theory of optimal control of linear
problems posed in Hilbert spaces. The approximation is based on the
functional calculus of self-adjoint operators and the Cauchy
formula. Under a number of assumptions the approximation is suitable
for implementation on a semi-decentralized computing architecture in
view of real-time control. Our method is particularly applicable to
problems in optimal control of systems governed by partial
differential equations with distributed observation and control.
Some relatively academic applications are presented for
illustration. More realistic examples relating to microsystem arrays
have already been published.
\end{abstract}


\maketitle


\section{Introduction}
This work is a contribution to the area of semi-decentralized
optimal control of large linear distributed systems for real-time
applications. It applies to systems modeled by linear partial
differential equations with observation and control distributed over
the whole domain. This is a strong assumption, but it does not mean
that actuators and sensors are actually continuously distributed.
The models satisfying such assumption may be derived by
homogenization of systems with periodic distribution of actuators
and sensors.

\medskip

In this paper we consider two classes of systems, those with bounded
control
and bounded observation operators as in R. Curtain and H. Zwart \cite{CurZwa}%
, and those with unbounded control but bounded observation operators
as in H.T. Banks and K. Ito \cite{BanIto}. In an example, we show
how the method may also be applied to a particular boundary control
problem. We view possible applications in the field of systems
including a network of actuators and sensors, see for instance
\cite{HuiYak} dedicated to arrays of Atomic Force Microscopes.

\medskip

We consider four linear operators $A,$ $B,$ $C$, $S$, and the Linear
Quadratic Regulator (LQR) problem stated classically as a
minimization problem,
\begin{eqnarray}
\mathcal{J}\left( z_{0},\uu\right) &=& \min\limits_{u\in
U}\mathcal{J}\left(
z_{0},u\right) ,  \label{minimization} \\
\text{with} ~ \mathcal{J}\left( z_{0},u\right) &=& \int_{0}^{+\infty
}\left\| Cz\right\| _{Y}^{2}+\left( Su,u\right) _{U} ~ dt,
\label{functional}
\end{eqnarray}%
constrained by a state equation,
\begin{equation}
\frac{dz}{dt}\left( t\right) =Az\left( t\right) +Bu\left( t\right)
\quad \text{for }t>0\quad \text{and }z\left( 0\right) =z_{0}\text{.}
\label{state equation}
\end{equation}%
Under usual assumptions there exists a unique solution
$\uu=-S^{-1}B^{\ast
}Pz$, where $P$ is a solution of the\ operational Riccati equation (\textbf{%
ORE}),
\begin{equation}
A^{\ast }P+PA-PBS^{-1}B^{\ast }P+C^{\ast }C=0\text{.} \label{Riccati
eq}
\end{equation}%
In the framework of \cite{CurZwa}, $A:D(A)\subset Z\mapsto Z,$
$B:U\mapsto Z, $ $C:Z\mapsto Y$, $S:U\mapsto U$ and consequently
$P:Z\mapsto Z$ for some linear spaces $Z,$ $U$ and $Y$. To derive
our semi-decentralized realization
of $Pz$, we further assume that there exists a linear self-adjoint operator $%
\Lambda :X\mapsto X$, three one-to-one mappings
\begin{equation}
\Phi _{Z}:X^{n_{Z}}\mapsto Z,\quad \Phi _{U}:X^{n_{U}}\mapsto U\quad \text{%
and }\Phi _{Y}:X^{n_{Y}}\mapsto Y,  \label{isomorphisms}
\end{equation}%
with appropriate integers $n_{Z},$ $n_{U}$ and $n_{Y},$ and four
continuous matrix-valued functions $\lambda \mapsto a(\lambda ),$
$\lambda \mapsto b(\lambda )$, $\lambda \mapsto c(\lambda )$ and
$\lambda \mapsto s(\lambda )$ such that
\begin{equation}
A=\Phi _{Z}a(\Lambda )\Phi _{Z}^{-1},\quad B=\Phi _{Z}b(\Lambda
)\Phi _{U}^{-1},\quad C=\Phi _{Y}c(\Lambda )\Phi _{Z}^{-1}\quad
\text{and }S=\Phi _{U}s(\Lambda )\Phi _{U}^{-1}.  \label{operator
factorizations}
\end{equation}
We notice that the functions of the self-adjoint operator $\Lambda $
used in the above formulae are defined using spectral theory of
self-adjoint operators (having a real spectrum) with compact or not
compact resolvent so that to encompass bounded and unbounded
domains. From (\ref{operator factorizations}), it follows that the
Riccati operator $P$ is factorized as
\begin{equation}
P=\Phi _{Z}p(\Lambda )\Phi _{Z}^{-1},  \label{decomposition of P}
\end{equation}%
where $\lambda \mapsto p(\lambda )$ is a continuous function,
solution of the algebraic Riccati equation (\textbf{ARE})
\begin{equation}
a^{T}\left( \lambda \right) p+pa\left( \lambda \right) -pb\left(
\lambda \right) s^{-1}\left( \lambda \right) b^{T}\left( \lambda
\right) p+c^{T}\left( \lambda \right) c\left( \lambda \right)
=0\text{.} \label{algebraic Ricc equation}
\end{equation}%
Our goal is reached once separate efficient semi-decentralized
approximations of $\Phi _{Z}$, $p(\Lambda )$ and $\Phi _{Z}^{-1}$
are provided for the realization of $P$ through (\ref{decomposition
of P}). This is generally not an issue for $\Phi _{Z}$ and for $\Phi
_{Z}^{-1},$ then the point is the semi-decentralized approximation
of $p(\Lambda )$. It might be
build by a polynomial approximation,%
\begin{equation}
p_{N}(\Lambda )=\sum_{k=0}^{N}d_{k}\Lambda ^{k}, \label{polynomial
approximation}
\end{equation}%
or a rational approximation,
\begin{equation}
\displaystyle p_{N}(\Lambda
)=\frac{\sum\limits_{k=0}^{N^{N}}d_{k}\Lambda
^{k}}{\sum\limits_{k^{\prime }=0}^{N^{D}}d_{k^{\prime }}^{\prime
}\Lambda ^{k^{\prime }}}\text{.}  \label{rational approximation}
\end{equation}%
Then, for practical implementations, the operator $\Lambda $ could
be replaced by a discretizations $\Lambda _{h},$ with parameter $h.$
We
emphasize that the formulae (\ref{polynomial approximation}) or (\ref%
{rational approximation}) yield large approximation errors, with respect to $%
h,$ due to the high powers of $\Lambda _{h}$. To overcome this
defect, we use an approximation based on the Cauchy integral which
requires to know the poles of $p$. In practice, we first approximate
the function $\lambda \mapsto p(\lambda )$ by a polynomial
approximation or a rational approximation $p_{N}(\lambda )$ with
degrees $N$ or $(N^{N},N^{D})$ sufficiently high to insure a very
small error. When $p_{N}$ is known its poles also, so we can state
the Cauchy formula for $p_{N}(\Lambda )$. This yields to introduce
the equations of the complex function $v=v_{1}+iv_{2}$
for each input $z\in Z,$%
\begin{equation}
(\xi -\Lambda )v=-i\xi ^{\prime }p_{N}\left( \xi \right) z,
\label{syst of DS}
\end{equation}%
where $\xi :(0,2\pi )\rightarrow \mathbb{C}$ is the contour of the
Cauchy formula. Denoting by $v^{\ell }$ the solution corresponding
to a quadrature point $\xi _{\ell }$ of the contour and $\omega
_{\ell }$ some quadrature weights, the final approximation of
$p(\Lambda )z$ is
\begin{equation}
p_{N,M}(\Lambda )z=\frac{1}{2\pi }\sum_{\ell =1}^{M}\omega _{\ell
}v_{1}^{\ell }.  \label{DS approximation}
\end{equation}%
Remark that the number $M$ of quadrature points is the only
important parameter governing the approximation error. For real-time
computation, the expression of $p_{N}$ is pre-computed, so the
approximation cost is also governed by $M$ only. With this method,
we do not observe a lack of precision when $\Lambda $ is replaced by
its discretizations $\Lambda _{h}$ and $M$ is large. In the sequel,
we show that the same derivation can be
done directly for $Qz=-S^{-1}B^{\ast }Pz$ provided that the isomorphisms $%
\Phi _{Z}$ and $\Phi _{U}$ are also some functions of $\Lambda .$

\medskip

This approach based on functional calculus is relatively simple, but
in each case it requires to determine the isomorphisms
(\ref{isomorphisms}). The theory has already been applied in
\cite{Yak} to a LQG control problem with a bounded operator $B$ that
is not a function of $\Lambda $. It has been shown how the control
approximation can be implemented through a distributed electronic
circuit. In \cite{LenYak1} and \cite{HuiYak} it has also been
applied to a one-dimensional array of cantilevers with regularly
spaced
actuators and sensors for which the operator $C$ is not a function of $%
\Lambda $. The underlying model was derived with a multiscale
method, an implementation of the semi-decentralized control was
provided in the form of a periodic network of resistors, and the
numerical validations of the complete strategy was carried out. In
the present paper, we illustrate the theory with four simpler
examples ranging from a simple heat equation with internal bounded
control and observation operators, a heat equation with an unbounded
control operator, a vibrating Euler-Bernoulli beam, and a heat
equation with a boundary controls.

\medskip

We notice that our method together improves and generalizes a
previous paper \cite{KadLenMr}. It was related to a specific
application, namely vibration control problem for a plate with a
periodic distribution of piezoelectric actuators and sensors. There,
the general isomorphisms (\ref{isomorphisms}) and the general
factorization (\ref{operator factorizations}) were not
introduced, and $p(\Lambda )$ was approximated by a polynomial as in (\ref%
{polynomial approximation}) which were severely limiting the
accuracy of the approximation. In both papers, the control method is
a LQR, but the theory is applicable to Riccati equations that may
arise in a number of other control problem, for instance for $H_{2}$
or $H_{\infty }$ dynamic compensators. Other extensions are also
possible, for instance, we may want to deal with functions of a non
self-adjoint operator $\Lambda $. In such a
case, another functional calculus, like these in \cite{MarSan} or in \cite%
{Haa}, could be used instead of the spectral theory. Other
frameworks for control problems of infinite dimensional systems
could also be used, for instance this of \cite{Lasiecka} for optimal
control with unbounded observations and unbounded controls.

\medskip

Other techniques have already been established, see \cite{BamPag}, \cite%
{PagBam}, \cite{Jov}, \cite{DanDul}, \cite{LanDAn} and the
references therein. But they are mostly focused on the infinite
length systems, see \cite{BamPag}, \cite{PagBam}, \cite{Jov} and
\cite{LanDAn} for systems governed by partial differential
equations, and \cite{DanDul} for discrete systems. Finally, in
\cite{LenMonYak} we developed another theoretical framework based on
the \textit{diffusive realization }applicable to a broad range of
linear operators on bounded or unbounded domains. In principle this
approach allows to cover general distributed control problems with
internal or boundary control. However, in this first paper in the
subject, only one-dimensional domains and linear operational
equations (e.g. Lyapunov equations) are covered.

\medskip

The paper is organized as follows. Notations and basic definitions
are recalled in Section \ref{Notations}. In Section \ref{Bounded
Control operators} the abstract approximation method is stated in
the framework of bounded control and observation operators. The
framework of unbounded control operators is treated in Section
\ref{Unbounded control operators}. Some extensions are outlined in
Section \ref{Extensions}. Most proofs are concentrated in Section
\ref{Proofs}. The illustrative examples are detailed
in Section \ref{Application} and finally the paper is concluded by Section %
\ref{conclusion}.

\section{Preliminary Results and Notations\label{Notations}}

The norm and the inner product of an Hilbert space $E$ are denoted by $%
||.||_{E}$ and $(.,.)_{E}.$ For a second Hilbert space $F,$ $\mathcal{L}%
(E,F) $ denotes the space of continuous linear operators from $E$ to
$F.$ In
addition, $\mathcal{L}(E,E)$ is denoted by $\mathcal{L}(E).$ One says that $%
\Phi \in \mathcal{L}(E,F)$ is an isomorphism from $E$ to $F$ if
$\Phi $ is one-to-one and if its inverse is continuous.

\medskip

Since the approximation method of $P$ is based on the concept of
matrices of functions of a self-adjoint operator, this section is
devoted to their definition. Let $\Lambda $ be a self-adjoint
operator on a separable Hilbert space $X$ with domain $D(\Lambda )$,
we denote by $\sigma (\Lambda )$ its spectrum and by $I_{\sigma
}=(\sigma _{\min },\sigma _{\max })\subset \mathbb{R}$ an open
interval that includes $\sigma (\Lambda )$. We recall that if
$\Lambda $ is compact then $\sigma (\Lambda )$ is bounded and is
only constituted of eigenvalues $\lambda _{k}.$ They are the
solutions to the eigenvalue problem $\Lambda \phi _{k}=\lambda
_{k}\phi _{k}$ where $\phi _{k}$ is an eigenvector associated to
$\lambda _{k}$ chosen normed in $X$, i.e. such that $||\phi
_{k}||_{X}=1$. For a given real valued function $f$, continuous on
$I_{\sigma }$, $f(\Lambda )$ is the linear self-adjoint operator on
$X$ defined by
\begin{equation*}
f(\Lambda )z=\sum_{k=1}^{\infty }f(\lambda _{k})z_{k}\phi _{k}\quad \text{%
where }z_{k}=(z,\phi _{k})_{X},
\end{equation*}%
with domain $ D(f(\Lambda ))=\{z\in X~|~\sum\limits_{k=1}^{\infty
}\left\vert f(\lambda _{k})z_{k}\right\vert ^{2}~<\infty \}.$
Then, if $f$ is a $ n_{1}\times n_{2}$ matrix of real valued functions $f_{ij},$ continuous on $%
I_{\sigma }$, $f(\Lambda )$ is a matrix of linear operators
$f_{ij}(\Lambda ) $ with domain
\begin{equation*}
D(f(\Lambda ))=\{z\in X^{n_{2}}~|~\sum\limits_{k=1}^{\infty
}\sum\limits_{j=1}^{n_{2}}|f_{ij}(\lambda
_{k})(z_{j})_{k}|^{2}~<\infty \quad \forall i=1 \ldots n_{1}\}.
\end{equation*}

\medskip

In the general case, where $\Lambda $ is not compact and where $f$
is still a continuous function, the self-adjoint operator $f(\Lambda
)$ is defined on $X$ by the Stieltjes integral
\begin{equation*}
f(\Lambda )=\int_{-\infty }^{+\infty }\lambda ~ d E_{\lambda },
\end{equation*}
and its domain is $D(f(\Lambda )) = \{z\in X ~ |~ \int_{-\infty
}^{+\infty
}|f(\lambda )|^{2}\text{ }d||E_{\lambda }z||_{X}^{2} ~ <\infty \} $ where $%
E_{\lambda }$ is the spectral family associated to $\Lambda$, see \cite%
{DauLio}. When $f$ is a matrix, $f(\Lambda )$ is a matrix of linear
operators with entries defined by the above formula and with domain
$$ D(f(\Lambda ))=\{z\in X^{n_{2}} ~ | ~ \int_{-\infty }^{+\infty
}\sum\limits_{j=1}^{n_{2}}|f_{ij}(\lambda )|^{2} ~ d||E_{\lambda
}z_{j}||_{X}^{2} ~ <\infty \quad \forall i=1 \ldots n_{1}\}.$$


\section{Bounded Control Operators\label{Bounded Control operators}}

In this section, we state the approximation result in the framework
of bounded input operators. We follow the mathematical setting
\cite{CurZwa} of the LQR problem (\ref{minimization}-\ref{state
equation}). So, $A$ is the infinitesimal generator of a continuous
semigroup on a separable Hilbert
space $Z$ with dense domain $D(A)$, $B\in \mathcal{L}(U,Z)$, $C\in \mathcal{L%
}(Z,Y)$ and $S\in \mathcal{L}(U,U)$ where $U$ and $Y$ are two
Hilbert spaces. We assume that $(A,B)$ is stabilizable and that
$(A,C)$ is detectable, in the sense that there exist $Q\in
\mathcal{L}(Z,U)$ and $F\in \mathcal{L}(Y,Z)$ such that $A-BQ$ and
$A-FC$ are the infinitesimal generators of two uniformly
exponentially stable continuous semigroups. For each $z_{0}\in Z$
the LQR problem (\ref{minimization}-\ref{state equation}) admits a
unique solution $\uu=-S^{-1}B^{\ast }Pz$ where $P\in \mathcal{L}(Z)$
is the unique self-adjoint nonnegative solution of the \textbf{ORE}
\begin{equation}
\left( A^{\ast }P+PA-PBS^{-1}B^{\ast }P+C^{\ast }C\right) z=0
\label{eq.Riccati.opercontinuz}
\end{equation}%
for all $z\in D(A).$ The adjoint $A^{\ast }$ of the unbounded
operator $A$ is defined from $D(A^{\ast })\subset Z$ to $Z$ by the
equality $(A^{\ast
}z,z^{\prime })_{Z}=(z,Az^{\prime })_{Z}$ for all $z\in D(A^{\ast })$ and $%
z^{\prime }\in D(A)$. The adjoint $B^{\ast }\in \mathcal{L}(Z,U)$ of
the bounded
operator $B$ is defined by $(B^{\ast }z,u)_{U}=(z,Bu)_{Z}$, the adjoint $%
C^{\ast }\in \mathcal{L}(Y,Z)$ being defined similarly.

\medskip

Now, we state specific assumptions for the approximation method. Here, $%
\Lambda $ is a given self-adjoint operator on a separable Hilbert
space $X$ which is chosen to be easily approximable on a
semi-decentralized architecture. Generally, $\Lambda $ is chosen
with regard to $A,$ then $\Phi _{Z}$ and $\Phi _{U}$ can be chosen
so that to have also a natural semi-decentralized approximation.

\begin{assumption}[H1]
There exist three integers $n_{Z},$ $n_{U}$ and $n_{Y}\in \mathbb{N}^{\ast }$%
, three isomorphisms $\Phi _{Z}\in \mathcal{L}(X^{n_{Z}},Z),$ $\Phi
_{U}\in \mathcal{L}(X^{n_{U}},U)$ and $\Phi _{Y}\in
\mathcal{L}(X^{n_{Y}},Y)$ and
four matrices of functions $a(\lambda )\in \mathbb{R}^{n_{Z}\times n_{Z}},$ $%
b(\lambda )\in \mathbb{R}^{n_{Z}\times n_{U}}$, $c(\lambda )\in \mathbb{R}%
^{n_{Y}\times n_{Z}}$ and $s(\lambda )\in \mathbb{R}^{n_{U}\times
n_{U}}$ continuous on $I_{\sigma }$ such that
\begin{equation*}
A=\Phi _{Z}a(\Lambda )\Phi _{Z}^{-1}, \quad B=\Phi _{Z}b(\Lambda
)\Phi _{U}^{-1}, \quad C=\Phi _{Y}c(\Lambda )\Phi _{Z}^{-1} \quad
\text{and} \quad S=\Phi _{U}s(\Lambda )\Phi _{U}^{-1}.
\end{equation*}
\end{assumption}

One of the consequences of this assumption, for a system governed by
a partial differential equation posed in a domain $\Omega ,$ is that
both the control and the observation must be distributed throughout
the domain, in conformity with what has been stated from the
beginning$.$

\begin{remark}
$\text{ }$

\begin{enumerate}
\item In case where all operators are function of $\Lambda $, then the
isomorphisms $\Phi $ are or not useful or can be chosen as function of $%
\Lambda $. In both cases $P$ is also a function $p$ of $\Lambda $.

\item Introducing the isomorphisms $\Phi _{Z}$, $\Phi _{Y}$ and $\Phi _{U}$
allows to deal with problems where operators $A$, $B$ and $C$ are
not functions of $\Lambda $.

\item When control is distributed over the entire domain Assumption (H1) is
generally satisfied. In Section \ref{exemple 3}, there is an example
of observation operator $C$ that is not a function of $\Lambda $,
while in the paper \cite{LenYak1} it is the case for $B$ the control
operator.

\item For boundary control or observation problems, it is impossible to find
such isomorphisms. Nevertheless, in Subsection \ref{exemple 4} we
show how to proceed to address some boundary control problems.

\item Multi-scale models with controls at the micro scale, as in \cite%
{LenYak1} and \cite{HuiYak}, are also possible applications.
\end{enumerate}
\end{remark}

\noindent We introduce the \textbf{ARE}
\begin{equation}
a^{T}\left( \lambda \right) p+pa\left( \lambda \right) -pb\left(
\lambda \right) s^{-1}\left( \lambda \right) b^{T}\left( \lambda
\right) p+c^{T}\left( \lambda \right) c\left( \lambda \right)
=0\text{.} \label{eq.Riccati.algebrique}
\end{equation}

\begin{assumption}[H2]
For all $\lambda \in I_{\sigma }$, the \textbf{ARE} (\ref%
{eq.Riccati.algebrique}) admits a unique nonnegative symmetric
solution denoted by $p(\lambda )$.
\end{assumption}

\begin{remark}
\textbf{This assumption is stronger than the typical sufficient
condition for the mere existence of a solution to the Riccati
equation [give ref].}
\end{remark}

We make the following choices for the inner products of $Z$, $U$ and
$Y$:
\begin{equation*}
\left( z,z^{\prime }\right) _{Z}=\left( \Phi _{Z}^{-1}z,\Phi
_{Z}^{-1}z^{\prime }\right) _{X^{n_{Z}}}, ~ \left( u,u^{\prime
}\right) _{U}=\left( \Phi _{U}^{-1}u,\Phi _{U}^{-1}u^{\prime
}\right) _{X^{n_{U}}} ~  \text{and} ~ \left( y,y^{\prime }\right)
_{Y}=\left( \Phi _{Y}^{-1}y,\Phi _{Y}^{-1}y^{\prime }\right)
_{X^{n_{Y}}}.
\end{equation*}
Thus $P$, $Q$ and $p$, $q$ are related as follows.

\begin{theorem}
\label{Th formul P bounded B}If (H1) and (H2) are fulfilled then
\begin{equation*}
P=\Phi _{Z}~p(\Lambda )~\Phi _{Z}^{-1}\quad \text{and}\quad \uu=-Qz
\end{equation*}%
where the controller $Q$ admits the factorization $Q=\Phi
_{U}q(\Lambda
)\Phi _{Z}^{-1}$ with $q(\Lambda )=s^{-1}(\Lambda )b^{T}(\Lambda )p(\Lambda )%
\text{.}$
\end{theorem}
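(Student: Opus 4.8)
The plan is to verify that the operator $\widetilde P := \Phi_Z\, p(\Lambda)\, \Phi_Z^{-1}$, built from the pointwise \textbf{ARE} solution of (H2), is bounded, self-adjoint, nonnegative, and solves the operational Riccati equation \eqref{eq.Riccati.opercontinuz}; the uniqueness statement from \cite{CurZwa} then forces $P=\widetilde P$, and the formula for $Q$ follows by a direct substitution. First I would record that, by functional calculus, $p(\Lambda)\in\mathcal L(X^{n_Z})$: indeed $\lambda\mapsto p(\lambda)$ is continuous on $I_\sigma$ and $\sigma(\Lambda)\subset I_\sigma$ is where the spectral measure lives, so $\|p(\Lambda)\|\le \sup_{\lambda\in\overline{\sigma(\Lambda)}}\|p(\lambda)\|<\infty$ (the interval may be taken relatively compact around the spectrum, or one argues via the closed graph theorem that a self-adjoint operator defined on all of $X^{n_Z}$ is bounded). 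Since the $\Phi$'s are isomorphisms, $\widetilde P\in\mathcal L(Z)$.

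Second, self-adjointness and nonnegativity: because $p(\lambda)$ is a symmetric nonnegative matrix for each $\lambda$, the operator $p(\Lambda)$ is self-adjoint and nonnegative on $X^{n_Z}$ with its standard inner product. The crucial point here is the \emph{choice of inner product} on $Z$, namely $(z,z')_Z=(\Phi_Z^{-1}z,\Phi_Z^{-1}z')_{X^{n_Z}}$, which makes $\Phi_Z$ a unitary map; consequently $(\Phi_Z M\Phi_Z^{-1})^*=\Phi_Z M^*\Phi_Z^{-1}$ for any $M\in\mathcal L(X^{n_Z})$, so $\widetilde P^*=\widetilde P$ and $(\widetilde Pz,z)_Z=(p(\Lambda)\Phi_Z^{-1}z,\Phi_Z^{-1}z)_{X^{n_Z}}\ge0$. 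The same unitarity identity, applied with the inner products chosen on $U$ and $Y$, gives the companion formulas $A^*=\Phi_Z a^T(\Lambda)\Phi_Z^{-1}$, $B^*=\Phi_U b^T(\Lambda)\Phi_Z^{-1}$ and $C^*=\Phi_Z c^T(\Lambda)\Phi_Y^{-1}$; note here one must check that $a(\Lambda)$ maps $D(a(\Lambda))$ appropriately and that taking adjoints commutes with the functional calculus for the (possibly unbounded) symbol $a$, which is standard spectral theory.

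Third, substitute the factorizations into the left-hand side of \eqref{eq.Riccati.opercontinuz}. Using the adjoint formulas above together with $S^{-1}=\Phi_U s^{-1}(\Lambda)\Phi_U^{-1}$ (valid since $s(\lambda)$ is invertible by (H2), $s^{-1}$ being continuous on $I_\sigma$), all intermediate $\Phi_U^{-1}\Phi_U$ and $\Phi_Y^{-1}\Phi_Y$ cancel, and every term acquires the common frame $\Phi_Z(\,\cdot\,)(\Lambda)\Phi_Z^{-1}$; the bracket becomes $\Phi_Z\bigl(a^T p+p a-p b s^{-1}b^T p+c^T c\bigr)(\Lambda)\Phi_Z^{-1}$, which vanishes on $D(A)=\Phi_Z D(a(\Lambda))$ precisely because $p(\lambda)$ solves the \textbf{ARE} \eqref{eq.Riccati.algebrique} for every $\lambda\in I_\sigma$ (apply the functional calculus to the identically-zero matrix function). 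By the uniqueness of the nonnegative self-adjoint solution of the \textbf{ORE} recalled from \cite{CurZwa}, $P=\widetilde P$. Finally $Q=S^{-1}B^*P=\Phi_U s^{-1}(\Lambda)\Phi_U^{-1}\cdot\Phi_U b^T(\Lambda)\Phi_Z^{-1}\cdot\Phi_Z p(\Lambda)\Phi_Z^{-1}=\Phi_U\bigl(s^{-1}b^T p\bigr)(\Lambda)\Phi_Z^{-1}$, and $\uu=-S^{-1}B^*Pz=-Qz$.

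I expect the main obstacle to be the bookkeeping of \emph{domains} when $A$ (equivalently $a(\Lambda)$) is unbounded: one must check that the manipulations in step three make sense on $D(A)$, that $A^*$ really is $\Phi_Z a^T(\Lambda)\Phi_Z^{-1}$ with the correct domain $D(A^*)=\Phi_Z D(a^T(\Lambda))$, and that $p(\Lambda)$ maps $D(a(\Lambda))$ into $D(a^T(\Lambda))$ so that the composition $A^*P$ is meaningful there — this uses that $a^T p$ and $p a$ are each, by the \textbf{ARE}, equal to a combination of the bounded symbols $pbs^{-1}b^Tp$ and $c^Tc$, hence extend to bounded operators, so no domain issue actually survives. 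Everything else is routine once the unitarity of the $\Phi$'s (from the prescribed inner products) is in hand.
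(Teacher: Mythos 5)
Your proposal is correct and follows essentially the same route as the paper: both define $\widetilde P=\Phi_Z\,p(\Lambda)\,\Phi_Z^{-1}$, use the fact that the chosen inner products make $\Phi^\ast=\Phi^{-1}$ (the paper isolates this as the opening remark of its proofs section and as Lemma \ref{funct calc generalized}) to get the adjoint factorizations, substitute into the \textbf{ORE} so that the bracket reduces to the \textbf{ARE} symbol applied to $\Lambda$, and conclude by uniqueness of the nonnegative self-adjoint solution; the formula for $Q$ then follows by the same composition you wrote. Your additional care with domains and with $S^{-1}=\Phi_U s^{-1}(\Lambda)\Phi_U^{-1}$ only makes explicit what the paper leaves implicit.
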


Now, we focus on a semi-decentralized approximation of $Q$ which
reduces to provide such an approximation for $q(\Lambda )$. We
restrict the presentation to the case of bounded operators $\Lambda
$ since they have a bounded spectra. This is sufficient for
applications to systems governed by partial differential equations
in bounded domains.

\begin{assumption}[H3]
The operator $\Lambda $ is bounded and its spectrum $\sigma (\Lambda
)$ is bounded, so there exists $R>0$ with $\sigma (\Lambda )\subset
(-R,R).$
\end{assumption}

This assumption can be relaxed, see Section \ref{extension}.

\begin{assumption}[H4]
The operators $\Phi _{Z}$, $\Phi _{Z}^{-1},$ $\Lambda $ and $(\xi
I-\Lambda )^{-1}$ admit semi-decentralized approximations for all
$\xi \in \mathbb{C}$ with $|\xi |=R$.
\end{assumption}
Now, we introduce two successive approximations $q_{N}(\Lambda )$ and $%
q_{N,M}(\Lambda )$ of $q(\Lambda )$ that play a key role in our
method.

\medskip

\noindent $\vartriangleright $ \textit{The rational approximation }$%
q_{N}(\Lambda )$\textit{:} Since the interval $I_{\sigma }$ is
bounded, each entries $q_{ij}$ of the matrix $q$ admits a rational
approximation on $I_{\sigma }$. This defines a matrix of rational
approximations of $q(\lambda )$,
\begin{equation}
\displaystyle q_{N}\left( \lambda \right) =\frac{\sum%
\limits_{k=0}^{N^{N}}d_{k}\lambda ^{k}}{\sum\limits_{k^{\prime
}=0}^{N^{D}}d_{k^{\prime }}^{\prime }\lambda ^{k^{\prime }}},
\label{apprxfractional}
\end{equation}
to be understood componentwise, so each $d_{k}$, $d_{k^{\prime
}}^{\prime }$ is a matrix and $N=\left( N^{N},N^{D}\right) $ is a
pair of matrices of polynomial degrees. The particular case
$N^{D}=0$ corresponds to a classical polynomial approximation. For
any $\eta >0$ the degrees of approximations
can be chosen so that the uniform estimate%
\begin{equation}
\sup_{\lambda \in I_{\sigma }}\left\vert q\left( \lambda \right)
-q_{N}\left( \lambda \right) \right\vert \leq C_{1}\left( q\right)
\eta \label{estim1}
\end{equation}%
holds.

\medskip

\noindent $\vartriangleright $ \textit{Approximation
}$q_{N,M}(\Lambda )$ \textit{by quadrature of the Cauchy integral:}
For any complex valued
function $g(\theta )$ continuous on $[0,2\pi ],$ we introduce $I_{M}(g)=%
\displaystyle\sum\limits_{\ell =1}^{M}\omega _{\ell }g(\theta _{\ell
})$ a quadrature rule for the integral $I\left( g\right)
=\int_{0}^{2\pi }g\left( \theta \right) ~d\theta $, $(\theta _{\ell
})_{\ell }$ denoting the nodes of a regular subdivision of $[0,2\pi
]$ and $\omega _{\ell }$ the associated quadrature weights. The
quadrature rule is assumed to satisfy an error
estimate as%
\begin{equation}
\left\vert I\left( g\right) -I_{M}\left( g\right) \right\vert \leq
C_{2}\left( g\right) \eta \text{.}  \label{estim2}
\end{equation}%
For $z\in X^{n_{Z}}$ and $\xi =\xi _{1}+i\xi _{2}$ a sufficiently
regular complex contour enlacing $\sigma (\Lambda )$ and not
surrounding any pole of $q_{N}.$ We parameterize it by a parameter
varying in $[0,2\pi ]$. We further introduce the solution
$(v_{i})_{i=1,2}$ of the system
\begin{equation}
\left\{
\begin{array}{c}
\xi _{1}v_{1}-\xi _{2}v_{2}-\Lambda v_{1}=\Re e\left( -i\xi ^{\prime
}q_{N}\left( \xi \right) \right) z, \\
\xi _{2}v_{1}+\xi _{1}v_{2}-\Lambda v_{2}=\Im m\left( -i\xi ^{\prime
}q_{N}\left( \xi \right) \right) z,%
\end{array}%
\right.  \label{transfo}
\end{equation}%
and the second approximation of $q(\Lambda )$ through its realizations%
\begin{equation}
q_{N,M}(\Lambda )z=\frac{1}{2\pi }\sum_{\ell =1}^{M}\omega _{\ell
}v_{1}^{\ell }\text{.}  \label{pNM}
\end{equation}%
We notice that two approximations $p_{N}$ and $p_{N,M}$ of the
function $p$ can be constructed by following the same steps. The
next theorem states the approximations of the operators $P$ and $Q.$

\begin{theorem}
\label{Th approx P bounded B}Under the assumptions (H1-H4), $P$ and
$Q$ can be approximated by one of the two semi-decentralized
approximations
\begin{eqnarray*}
P_{N}=\Phi _{Z}p_{N}\left( \Lambda \right) \Phi _{Z}^{-1} &\text{
and }& \quad Q_{N}=\Phi _{U}q_{N}\left( \Lambda \right) \Phi
_{Z}^{-1} \\
\text{or } P_{N,M}=\Phi _{Z}p_{N,M}\left( \Lambda \right) \Phi
_{Z}^{-1} & \text{ and } & \quad Q_{N,M} =\Phi _{U}q_{N,M}\left(
\Lambda \right) \Phi _{Z}^{-1}\text{.}
\end{eqnarray*}
Moreover, for any $\eta >0,$ there exist $N$ and $M$ such that
\begin{eqnarray*}
\left\| P-P_{N}\right\| _{\mathcal{L}\left( Z\right) } &\leq
&C_{3}\eta \text{, } \left\| Q-Q_{N}\right\| _{\mathcal{L}\left(
Z,U\right) }\leq
C_{3}^{\prime }\eta \\
\text{and }\left\| P-P_{N,M}\right\| _{\mathcal{L}\left( Z\right) }
&\leq &C_{4}\eta \text{, }\left\| Q-Q_{N,M}\right\|
_{\mathcal{L}\left( Z,U\right) }\leq C_{4}^{\prime }\eta ,
\end{eqnarray*}
$C_{3}$, $C_{3}^{\prime }$ and $C_{4},$ $C_{4}^{\prime }$ being
independent of $\eta ,$ $N$ and $M$.
\end{theorem}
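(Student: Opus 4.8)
The plan is to conjugate everything by the isomorphisms $\Phi_Z$ and $\Phi_U$, which are \emph{unitary} for the inner products fixed just before Theorem~\ref{Th formul P bounded B}, so that the whole statement reduces to norm estimates in $\mathcal{L}(X^{n_Z})$ and $\mathcal{L}(X^{n_Z},X^{n_U})$ for the matrix functional calculus of $\Lambda$. The only elementary tool used repeatedly is the spectral bound $\|f(\Lambda)\|\le\sup_{\lambda\in\sigma(\Lambda)}\|f(\lambda)\|\le\sup_{\lambda\in I_\sigma}\|f(\lambda)\|$, valid for any matrix $f$ of functions continuous on $I_\sigma$ (the right-hand side being the matrix norm), which is immediate from the definition of $f(\Lambda)$ through the spectral measure. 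Granting Theorem~\ref{Th formul P bounded B}, namely $P=\Phi_Z\,p(\Lambda)\,\Phi_Z^{-1}$ and $Q=\Phi_U\,q(\Lambda)\,\Phi_Z^{-1}$, one checks that $p_N$, $q_N$, $p_{N,M}$, $q_{N,M}$ are matrices of functions continuous on $I_\sigma$ near $\sigma(\Lambda)$, so that $P_N$, $Q_N$, $P_{N,M}$, $Q_{N,M}$ are well defined; unitarity of $\Phi_Z$, $\Phi_U$ then gives $\|P-P_N\|_{\mathcal{L}(Z)}=\|(p-p_N)(\Lambda)\|$, $\|Q-Q_N\|_{\mathcal{L}(Z,U)}=\|(q-q_N)(\Lambda)\|$, and likewise with $p_{N,M}$, $q_{N,M}$.

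For $P_N$ and $Q_N$ the estimate is then immediate: combining the spectral bound with (\ref{estim1}) and its analogue for $p$ yields $\|P-P_N\|_{\mathcal{L}(Z)}\le C_3\eta$ and $\|Q-Q_N\|_{\mathcal{L}(Z,U)}\le C_3'\eta$, with $C_3,C_3'$ fixed multiples of $C_1(p),C_1(q)$ (equivalence of matrix norms absorbed), hence independent of $\eta$, $N$, $M$. For $P_{N,M}$, $Q_{N,M}$ I would split $\|P-P_{N,M}\|\le\|P-P_N\|+\|P_N-P_{N,M}\|$ and concentrate on the second term. The key remark is that $p_{N,M}(\Lambda)$ is \emph{exactly} a function of $\Lambda$: with $h_\ell(\lambda):=\frac{-i\,\xi'(\theta_\ell)\,p_N(\xi(\theta_\ell))}{\xi(\theta_\ell)-\lambda}$, the solution of (the $p$-analogue of) the system (\ref{transfo}) is $v^\ell=h_\ell(\Lambda)z$, whence $v_1^\ell=(\Re h_\ell)(\Lambda)z$ and, by (\ref{pNM}), $p_{N,M}(\Lambda)=\rho_{N,M}(\Lambda)$ with $\rho_{N,M}:=\frac{1}{2\pi}\sum_\ell\omega_\ell\,\Re h_\ell$. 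On the other hand, Cauchy's formula on the circle $\{|\xi|=R\}$ — which by (H3) encircles $\sigma(\Lambda)$ and which we take to enclose no pole of $p_N$ — gives, for $\lambda\in\sigma(\Lambda)$, $p_N(\lambda)=\frac{1}{2\pi}\int_0^{2\pi}g_\lambda(\theta)\,d\theta$ where $g_\lambda(\theta):=\Re\left(\frac{-i\,\xi'(\theta)\,p_N(\xi(\theta))}{\xi(\theta)-\lambda}\right)$, the integral being real because $p_N$ has real coefficients. Hence $(p_N-\rho_{N,M})(\lambda)=\frac{1}{2\pi}(I(g_\lambda)-I_M(g_\lambda))$, and (\ref{estim2}) together with the spectral bound and unitarity of $\Phi_Z$ gives $\|P_N-P_{N,M}\|_{\mathcal{L}(Z)}\le\frac{1}{2\pi}\sup_{\lambda\in\sigma(\Lambda)}C_2(g_\lambda)\,\eta$; the same computation with $q_N$ in place of $p_N$ handles $Q_N-Q_{N,M}$.

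Everything thus comes down to bounding $\sup_{\lambda\in\sigma(\Lambda)}C_2(g_\lambda)$ by a constant \emph{independent of $N$} (and of $\eta$, $M$), and this is the step I expect to be the real difficulty. Since $C_2(g_\lambda)$ depends on $g_\lambda$ and finitely many of its $\theta$-derivatives, Leibniz' rule together with repeated differentiation of the kernel $\theta\mapsto(\xi(\theta)-\lambda)^{-1}$ bounds these by products of the fixed quantities $\sup_{|\xi|=R}|\xi^{(j)}|$ and $\mathrm{dist}(\{|\xi|=R\},\sigma(\Lambda))^{-k}$ — the latter finite and positive by (H3), since $\sigma(\Lambda)$ is compact and contained in $(-R,R)$ — times $\sup_{|\xi|=R}\|p_N^{(j)}(\xi)\|$ for $j$ up to the order demanded by the quadrature rule. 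So the whole matter reduces to bounding $p_N$ and finitely many of its derivatives on the \emph{fixed} circle $\{|\xi|=R\}$, uniformly in $N$; here some care is needed, because convergence of the rational approximation on the real interval $I_\sigma$ alone controls neither the size of $p_N$ on the complex circle nor the location of its poles. The way I would close this is to choose the approximants $p_N$, $q_N$ in (\ref{apprxfractional}) to be analytic and uniformly bounded on a fixed complex neighbourhood of $\overline{I_\sigma}$ that contains $\{|\xi|=R\}$ — for instance truncated expansions of $p$ and $q$ on such a neighbourhood, which is legitimate once $p$ and $q$ are known to be analytic there, itself a consequence of analyticity of $a,b,c,s$ together with (H2) via the implicit function theorem applied to the \textbf{ARE} (\ref{eq.Riccati.algebrique}). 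Cauchy's estimates then give $\sup_{|\xi|=R}\|p_N^{(j)}(\xi)\|\le C_j$, and likewise for $q_N$, uniformly in $N$, while the poles of $p_N$, $q_N$ stay outside the disc for $N$ large; consequently $\|P-P_{N,M}\|_{\mathcal{L}(Z)}\le C_4\eta$ and $\|Q-Q_{N,M}\|_{\mathcal{L}(Z,U)}\le C_4'\eta$ with $C_4$, $C_4'$ independent of $\eta$, $N$, $M$. Finally, given $\eta>0$, one fixes $N$ large enough for the estimates on $p_N$, $q_N$ (using (\ref{estim1}) and the above), then $M$ large enough for (\ref{estim2}); this produces the $N$ and $M$ asserted in the statement.
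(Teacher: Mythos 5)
Your proposal is correct and follows essentially the same route as the paper's proof: the first pair of estimates comes from the rational-approximation bound (\ref{estim1}) combined with the spectral bound of Lemma~\ref{funct calc}~(7) and the unitarity $\Phi^{\ast}=\Phi^{-1}$ induced by the chosen inner products, and the second pair from representing $q_{N}(\Lambda)$ by the operator Cauchy formula, applying the quadrature estimate (\ref{estim2}) pointwise in $\lambda$ together with the same spectral bound, and concluding by the triangle inequality. The uniformity in $N$ of the quadrature constant that occupies your final paragraph is a genuine subtlety the paper passes over in silence, but given the order of quantifiers in the statement (for each $\eta$ one may choose $N$ first and then $M$ for that fixed $q_{N}$, whose poles are known and kept outside the contour) it can be dispatched without your additional analyticity hypotheses on $a,b,c,s$, which go beyond the continuity required in (H1).
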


\begin{remark}
In the case of a polynomial approximation, i.e. $N^{D}=0$, we can
set a circle as contour $\xi (\theta )=Re^{i\theta }$. For actual
rational approximations, the contour must leave the poles outside,
so we choose an
ellipse centered at $\frac{R_{1}}{2}$ parameterized by $\xi (\theta )=\frac{R_{1}}{2}%
\left( 1+\cos \left( \theta \right) \right) +iR_{2}\sin \left(
\theta
\right) $ where $R_{1}$ and $R_{2}$ are for the major and minor radii and $%
R_{2}$ is small enough.
\end{remark}

\begin{remark}
\label{Rk KadLen}The approximation of $p$ used in \cite{KadLenMr} is
based on Taylor series, so it is applicable only if the interval
$I_{\sigma }$ is sufficiently small. The approximation proposed in
our paper does not suffer from this drawback.
\end{remark}

\begin{remark}
\label{Rk BamPag}In case where the solution $P$ of a Riccati
equation is a kernel operator (see \cite{Lio} for optimal control of
systems governed by
partial differential equations) i.e. $Pz(x)=\int_{\Omega }\overline{p}%
(x,x^{\prime })z(x^{\prime })dx^{\prime }$ and if $\Lambda $ is a
compact
operator then the kernel may be decomposed on a basis of eigenvectors of $%
\Lambda $,%
\begin{equation*}
\overline{p}(x,x^{\prime })=\sum\limits_{k=1}^{\infty }p(\lambda
_{k})\phi _{k}(x)\phi _{k}(x^{\prime }).
\end{equation*}%
The truncation technique used in \cite{BamPag} can be applied to
build a
semi-decentralized approximation of $P$. However, when the decay of $%
\overline{p}$ is not very fast, this technique is not efficient, see
for example the case $p(\lambda )=\lambda $ that may yield from a
LQR problem.
\end{remark}

For concrete real-time computations one can use either of the two formulae (%
\ref{apprxfractional}) or (\ref{pNM}) given that both are
semi-decentralized, but we prefer the second since it does not make
use of powers of $\lambda .$ The reason will become clearer when
discretizing. In a real-time computation, the realization
$q_{N,M}(\Lambda )z$ requires solving $M$ systems (\ref{transfo})
corresponding to $M$ complex values $\xi (\theta _{\ell })$.\ So the
parameter $M$ is essential to evaluate the cost of our algorithm.
The matrix $q_{N}$ is pre-computed off-line once and for all and we
choose $N$ sufficiently large that $q_{N}$ is a very good
approximation of $q$. Consequently, $M$ is the only parameter that
influences the accuracy of the method, except the parameter space
discretization that is discussed now.

\medskip

The end of the section is devoted to spatial discretization. For the
sake of simplicity, the interval is meshed with regularly spaced
nodes separated by a distance $h$.

\medskip

\noindent $\vartriangleright $ \textit{Spatial discretization with
polynomial approximation:} First, we introduce $\Lambda _{h}^{k}$
the finite
differences discretizations of $\Lambda ^{k}$, with $k=1\cdots N$. For $%
N^{D}=0$, the discretization $q_{N,h}$ of $q_{N}$ in
(\ref{apprxfractional}) can be written as
\begin{equation*}
q_{N,h}z_{h}=\sum_{k=0}^{N}d_{k}\Lambda _{h}^{k}z_{h},
\end{equation*}%
where $z_{h}$ is the vector of nodal values of $z$. Their
discretization yields very high errors because the powers of
$\Lambda $. This can be avoided by using the Cauchy formula, i.e.
the equation (\ref{transfo}).

\medskip

\noindent $\vartriangleright $ \textit{Spatial discretization with
Cauchy formula approximation:} For each quadrature point $\xi :=\xi
_{1,\ell }+i\xi _{2,\ell }$, the discrete approximation $\left(
v_{1,h}^{\ell },v_{2,h}^{\ell }\right) $ of $\left( v_{1}^{\ell
},v_{2}^{\ell }\right) $ is the solution of the discrete set of
equations
\begin{equation}
\left\{
\begin{array}{c}
\xi _{1,\ell}v_{1,h}^{\ell }-\xi _{2}v_{2,h}^{\ell }-\Lambda
_{h}v_{1,h}=\Re
e\left( -i\xi_{\ell} q_{N}\left( \xi_{\ell,\ell} \right) \right) z_{h}, \\
\xi _{2,\ell}v_{1,h}^{\ell }+\xi _{1,\ell}v_{2,h}^{\ell }-\Lambda
_{h}v_{2,h}=\Im
m\left( -i\xi_{\ell} q_{N}\left( \xi_{\ell} \right) \right) z_{h}\text{.}%
\end{array}%
\right.  \label{Pbdiscspatiale}
\end{equation}%
Thus we deduce the discretization $q_{N,M,h}$ of the approximation
$q_{N,M}$ in (\ref{pNM}),
\begin{equation}
q_{N,M,h}z_{h}=\frac{1}{2\pi }\sum_{\ell =1}^{M}\omega _{\ell
}v_{1,h}^{\ell }\text{.}  \label{discspatiale}
\end{equation}%
Under the Assumption (H4), we introduce $\Phi _{U,h}$ and $\Phi
_{Z,h}$ the semi-decentralized approximations of $\Phi _{U}$ and
$\Phi _{Z}$. So, the approximations of $\uu_{N}$ and $\uu_{N,M}$ by
a spatial discretization are
\begin{equation}
\uu_{N,h}=-\Phi _{U,h}q_{N,h}\Phi _{Z,h}^{-1}z_{h}\quad \text{and}\quad \uu%
_{N,M,h}=-\Phi _{U,h}q_{N,M,h}\Phi _{Z,h}^{-1}z_{h}.
\label{discspatialeu}
\end{equation}%
This constitutes two different final semi-decentralized approximations of $%
\uu$.

\begin{remark}
\label{remarque10} The approximations $\uu_{N,h}$ and $\uu_{N,M,h}$
are given in the general case where the isomorphisms $\Phi _{Z}$ and
$\Phi _{U}$ are not function of $\Lambda $ only. Therefore,
\textbf{we use our approximation technique to represent} $q(\Lambda
)$. In some cases $\Phi _{Z}$ and $\Phi _{U}$ are function of
$\Lambda $ and then $Q$ is also and the approximation is developed
directly on it that we denote by $k(\Lambda )$,
\begin{equation}
\uu_{N,h}=-k_{N,h}(\Lambda )z_{h}\quad \text{and}\quad \uu%
_{N,M,h}=-k_{N,M,h}(\Lambda )z_{h}\text{.}  \label{discspatialeu1}
\end{equation}%
In the case where $\Phi _{Z}$ and $\Phi _{U}^{-1}Q$ are functions of $%
\Lambda $ then the approximation is developed on $\Phi _{U}^{-1}Q$,
we will
also denote it by $k(\Lambda )$ without risk of confusion,%
\begin{equation*}
\uu_{N,h}=-\Phi _{U,h}k_{N,h}(\Lambda )z_{h}\quad \text{and}\quad \uu%
_{N,M,h}=-\Phi _{U,h}k_{N,M,h}(\Lambda )z_{h}.
\end{equation*}
\end{remark}

\section{Unbounded Control Operators \label{Unbounded control operators}}

When the input operator $B$ is unbounded from $U$ to $Z$ and the
observation
operator $C$ is bounded from $Z$ to $Y$, we use the framework of \cite%
{BanIto} where $V$ is another Hilbert space, $V^{\prime }$ is its
dual space
with respect to the pivot space $Z,$ $A\in \mathcal{L}(V,V^{\prime }),$ $%
B\in \mathcal{L}(U,V^{\prime })$ and $C\in \mathcal{L}(Z,Y)$. A
number of other technical assumptions are not detailed here. The
state equations are written in the sense of $V^{\prime }$ with
$z_{0}\in Z.$ The optimal control is $\uu=-B^{\ast }Pz$ where $P$ is
the unique nonnegative solution of the Riccati operatorial equation
\begin{equation}
\left( A^{\ast }P+PA-PBB^{\ast }P+C^{\ast }C\right) v=0,
\label{eq.Riccatinonborne}
\end{equation}%
for all $v\in V.$ The adjoint $A^{\ast }\in \mathcal{L}(V,V^{\prime
})$ is defined by $\left\langle A^{\ast }v,v^{\prime }\right\rangle
_{V^{\prime
},V}=\left\langle v,Av^{\prime }\right\rangle _{V,V^{\prime }}$ when $%
B^{\ast }\in \mathcal{L}(V^{\prime },U)$ is defined as the adjoint
of a bounded operator. We keep the same inner products for $Z,$ $U$
and $Y$, and those of $V$ and $V^{\prime }$ are
\begin{equation*}
\left( v,v^{\prime }\right) _{V}=\left( \Phi _{V}^{-1}v,\Phi
_{V}^{-1}v^{\prime }\right) _{X^{n_{Z}}}\quad \text{and}\quad \left(
v,v^{\prime }\right) _{V^{\prime }}=\left( \Phi _{V^{\prime
}}^{-1}v,\Phi _{V^{\prime }}^{-1}v^{\prime }\right) _{X^{n_{Z}}}.
\end{equation*}%
Moreover, we choose $J=\Phi _{V}\Phi _{V^{\prime }}^{-1}$ as the
canonical isomorphism from $V^{\prime }$ to $V$ and the duality
product between $V$ and $V^{\prime }$ is
\begin{equation*}
\left\langle v,v^{\prime }\right\rangle _{V,V^{\prime
}}=(v,Jv^{\prime })_{V}.
\end{equation*}

\begin{assumption}[H1']
Same statement as (H1) excepted that
\begin{equation*}
A=\Phi _{V^{\prime }}a(\Lambda )\Phi _{V}^{-1}\quad \text{and}\quad
B=\Phi _{V^{\prime }}b(\Lambda )\Phi _{U}^{-1}
\end{equation*}%
where $\Phi _{V}\in \mathcal{L}(X^{n_{Z}},V)$ and $\Phi _{V^{\prime
}}\in \mathcal{L}(X^{n_{Z}},V^{\prime })$ are two additional
isomorphisms. Moreover,
\begin{equation*}
\Phi _{V}=\phi _{V}(\Lambda )\text{, }\quad \Phi _{Z}=\phi
_{Z}(\Lambda )\quad \text{and}\quad \Phi _{V^{\prime }}=\phi
_{V^{\prime }}(\Lambda )
\end{equation*}%
are some functions of $\Lambda $.
\end{assumption}

Here, the \textbf{ARE} is
\begin{gather}
\phi _{V^{\prime }}\left( \lambda \right) a^{T}\left( \lambda
\right) ~ p ~ \phi _{V^{\prime }}^{-1}\left( \lambda \right) \phi
_{V}\left( \lambda \right) +\phi _{V}\left( \lambda \right) ~ p ~
a\left( \lambda \right)
\notag \\
-\phi _{V}\left( \lambda \right) ~ p ~ b\left( \lambda \right)
s^{-1}\left( \lambda \right) b^{T}\left( \lambda \right) ~ p ~ \phi
_{V^{\prime }}^{-1}\left( \lambda \right) \phi _{V}\left( \lambda
\right)
\label{eq.Riccati.algebriquenonborne} \\
+\phi _{Z}\left( \lambda \right) c^{T}\left( \lambda \right) c\left(
\lambda \right) \phi _{Z}^{-1}\left( \lambda \right) \phi _{V}\left(
\lambda \right) =0\text{.}  \notag
\end{gather}
\begin{assumption}[H2']
For all $\lambda \in I_{\sigma }$, the \textbf{ARE} (\ref%
{eq.Riccati.algebriquenonborne}) admits a unique nonnegative
solution denoted by $p(\lambda )$.
\end{assumption}

\begin{theorem}
\label{Th formul P unbounded B}If (H1',H2') are fulfilled, then
\begin{equation*}
P=\Phi _{V}~p(\Lambda )~\Phi _{V^{\prime }}^{-1}\quad
\text{and}\quad \uu=-Qz
\end{equation*}%
where $Q$ admits the factorization $Q=\Phi _{U}q(\Lambda )$ with
$q(\Lambda )=b^{T}(\Lambda )\phi _{V^{\prime }}^{-1}\phi
_{V}p(\Lambda )\phi _{V^{\prime }}^{-1}.$
\end{theorem}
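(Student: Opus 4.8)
The plan is to follow the scheme of the proof of Theorem~\ref{Th formul P bounded B}: exhibit $\Phi_{V}\,p(\Lambda)\,\Phi_{V'}^{-1}$ as an explicit solution of the operational Riccati equation~(\ref{eq.Riccatinonborne}), check that it carries the structural properties (self-adjointness and nonnegativity on the pivot space $Z$) that single out $P$, conclude $P=\Phi_{V}\,p(\Lambda)\,\Phi_{V'}^{-1}$ from the Banks--Ito uniqueness statement, and finally read off $Q$ from $\uu=-B^{\ast}Pz$.

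First I would record the adjoint formulae produced by the functional calculus. The inner products chosen for $Z,U,Y,V,V'$ turn $\Phi_{Z},\Phi_{U},\Phi_{Y},\Phi_{V},\Phi_{V'}$ into isometric isomorphisms, so each of them satisfies $\Phi^{\ast}=\Phi^{-1}$ between the relevant pair of spaces; moreover, since $\Lambda$ is self-adjoint on $X$ and the entries of $a,b,c$ are real, $m(\Lambda)^{\ast}=m^{T}(\Lambda)$ for every continuous real matrix-valued $m$. Combining these with (H1') and with the definitions of the adjoints --- $B^{\ast}$ and $C^{\ast}$ being Hilbert-space adjoints of bounded operators, $A^{\ast}\in\mathcal{L}(V,V')$ being defined through the duality product $\langle v,v'\rangle_{V,V'}=(v,Jv')_{V}$ with $J=\Phi_{V}\Phi_{V'}^{-1}$ --- one obtains
\[
A^{\ast}=\Phi_{V'}\,a^{T}(\Lambda)\,\Phi_{V}^{-1},\qquad B^{\ast}=\Phi_{U}\,b^{T}(\Lambda)\,\Phi_{V'}^{-1},\qquad C^{\ast}=\Phi_{Z}\,c^{T}(\Lambda)\,\Phi_{Y}^{-1}.
\]
The only mildly delicate point here is $A^{\ast}$, since the identity must hold in $\mathcal{L}(V,V')$: one has to push the defining relation through $J$ and use the isometry of $\Phi_{V}$, so that the factors $\Phi_{V}\Phi_{V'}^{-1}$ and $\Phi_{V'}\Phi_{V}^{-1}$ cancel correctly.

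Next I would substitute the ansatz $\widetilde{P}:=\Phi_{V}\,p(\Lambda)\,\Phi_{V'}^{-1}$ into the operator $A^{\ast}P+PA-PBB^{\ast}P+C^{\ast}C$ of~(\ref{eq.Riccatinonborne}) and apply the result to $v=\Phi_{V}x$, $x\in X^{n_{Z}}$. With the adjoint formulae above, every internal pair such as $\Phi_{V'}^{-1}\Phi_{V'}$ or $\Phi_{U}^{-1}\Phi_{U}$ collapses to the identity, while the ``mismatched'' pairs $\Phi_{V'}^{-1}\Phi_{V}$ and $\Phi_{Z}^{-1}\Phi_{V}$ become the operators $\phi_{V'}^{-1}(\Lambda)\phi_{V}(\Lambda)$ and $\phi_{Z}^{-1}(\Lambda)\phi_{V}(\Lambda)$, precisely because (H1') also requires $\Phi_{V},\Phi_{Z},\Phi_{V'}$ to be functions of $\Lambda$. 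Since the continuous functional calculus respects sums, products, inverses and transposes of matrix-valued functions on $I_{\sigma}$, and $\sigma(\Lambda)\subset I_{\sigma}$, this reduces~(\ref{eq.Riccatinonborne}) to the statement that $p(\Lambda)$ satisfies the \textbf{ARE}~(\ref{eq.Riccati.algebriquenonborne}) read through the functional calculus; the latter holds because (H2') makes $p(\lambda)$ satisfy~(\ref{eq.Riccati.algebriquenonborne}) at every $\lambda\in I_{\sigma}$. Hence $\widetilde{P}$ solves~(\ref{eq.Riccatinonborne}) on $V$. Running the same chain of identities with $p(\lambda)=p(\lambda)^{T}\ge 0$ --- transported first through the spectral theorem and then through the compatible inner products --- shows that $\widetilde{P}$ is self-adjoint and nonnegative as an operator on $Z$, so the Banks--Ito uniqueness statement forces $P=\widetilde{P}$.

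Finally $Q$ comes out of $\uu=-B^{\ast}Pz$: with the formula for $B^{\ast}$,
\[
B^{\ast}P=\Phi_{U}\,b^{T}(\Lambda)\,\Phi_{V'}^{-1}\,\Phi_{V}\,p(\Lambda)\,\Phi_{V'}^{-1}=\Phi_{U}\bigl(b^{T}(\Lambda)\,\phi_{V'}^{-1}(\Lambda)\,\phi_{V}(\Lambda)\,p(\Lambda)\,\phi_{V'}^{-1}(\Lambda)\bigr)=\Phi_{U}\,q(\Lambda),
\]
which is the announced factorization, and therefore $\uu=-Qz$. I expect the main obstacle to lie not in this algebra but in the functional-analytic bookkeeping of the substitution step: one must take all compositions in~(\ref{eq.Riccatinonborne}) on the correct domains --- the equation lives in $V'$ and is tested on $V$ --- check that $\Phi_{V}^{-1}$ maps the pertinent copy of $V$ into $X^{n_{Z}}$ so that the composition with $\Phi_{V}$ is well defined, and apply the ``isometry implies adjoint equals inverse'' argument between the right pairs of spaces; and on the uniqueness side, one must unwind the relations between the $Z$-, $V$- and $V'$-inner products induced by $\phi_{Z},\phi_{V},\phi_{V'}$ to be sure $\widetilde{P}$ genuinely satisfies the hypotheses (self-adjoint, nonnegative on the pivot $Z$) under which Banks--Ito assert uniqueness.
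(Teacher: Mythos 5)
Your proposal is correct and follows essentially the same route as the paper: derive $A^{\ast}=\Phi_{V'}a^{T}(\Lambda)\Phi_{V}^{-1}$ from the duality pairing through $J=\Phi_{V}\Phi_{V'}^{-1}$, substitute the ansatz to reduce the operational Riccati equation to the \textbf{ARE} (\ref{eq.Riccati.algebriquenonborne}), and conclude by the same uniqueness argument as in Theorem \ref{Th formul P bounded B}. Your write-up is in fact more explicit than the paper's (which compresses the final steps into ``follows the same steps as in Theorem \ref{Th formul P bounded B}''), particularly on the adjoint bookkeeping and the domains on which the compositions are taken.
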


The following assumptions are necessary for the semi-decentralized
approximation of $P$.

\begin{assumption}[H4']
Same statement that (H4) completed by $\Phi _{V}$, $\Phi _{V^{\prime
}} $ and $\Phi _{V^{\prime }}^{-1}$ admit a semi-decentralized
approximation.
\end{assumption}

In the next statement, the approximations $q_{N}$ and $q_{N,M}$ of
$q$ are built according to the formulae (\ref{apprxfractional}) and
(\ref{pNM}).

\begin{theorem}
\label{Th Approx P unbounded B}Under the Assumptions
(H1',H2',H3,H4'), $P$ and $Q$ can be approximated by one of the two
semi-decentralized approximations
\begin{eqnarray*}
 P_{N}=\Phi _{V}p_{N}(\Lambda )\Phi _{V^{\prime }}^{-1} &
\text{and } & Q_{N}=\Phi _{U}q_{N}(\Lambda ), \\
\text{or } P_{N,M}=\Phi _{V}p_{N,M}(\Lambda )\Phi _{V^{\prime
}}^{-1} & \text{and } & Q_{N,M}=\Phi _{U}q_{N,M}(\Lambda ).
\end{eqnarray*}
Moreover, for any $\eta >0,$ there exist $N$ and $M$ such that
\begin{eqnarray*}
\left\|P-P_{N}\right\|_{\mathcal{L}(V^{\prime },V)} &\leq& C_{3}\eta
, \quad \left\|Q-Q_{N}\right\|_{\mathcal{L}(V^{\prime },U)} \leq
C^{\prime}_{3}\eta , \\
\text{and } \left\|P-P_{N,M}\right\|_{\mathcal{L}(V^{\prime
},V)}&\leq& C_{4}\eta , \quad
\left\|Q-Q_{N,M}\right\|_{\mathcal{L}(V^{\prime },U)} \leq
C^{\prime}_{4}\eta ,
\end{eqnarray*}
$C_{3}$, $C^{\prime}_{3}$, $C_{4}$ and $C^{\prime}_{4}$ being
independent of $\eta ,$ $N$ and $M$.
\end{theorem}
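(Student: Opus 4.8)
The plan is to mirror the proof of Theorem \ref{Th approx P bounded B}, adapting each step to the unbounded-control functional setting governed by (H1'). First I would record the structural identities already available: by Theorem \ref{Th formul P unbounded B} we have the exact factorizations $P=\Phi _{V}p(\Lambda )\Phi _{V^{\prime }}^{-1}$ and $Q=\Phi _{U}q(\Lambda )$ with $q(\Lambda )=b^{T}(\Lambda )\phi _{V^{\prime }}^{-1}\phi _{V}p(\Lambda )\phi _{V^{\prime }}^{-1}$. Since all the $\phi$'s, together with $a,b,c,s$, are continuous matrix-valued functions on the compact interval $\overline{I_{\sigma }}$ (using (H3)), and since $p$ is continuous by (H2') — continuity of $p$ in $\lambda$ being inherited from continuity of the data of the \textbf{ARE} and uniqueness of the nonnegative solution, via a standard implicit-function / perturbation argument for algebraic Riccati equations — the composite function $q$ is itself continuous on $\overline{I_{\sigma }}$. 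Hence, exactly as in Section \ref{Bounded Control operators}, for any $\eta>0$ one may pick a rational (or polynomial) matrix $q_{N}$ with $\sup_{\lambda \in I_{\sigma }}|q(\lambda )-q_{N}(\lambda )|\le C_{1}(q)\eta$, and similarly a rational approximation $p_{N}$ of $p$.

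Next I would convert these scalar-function estimates into operator-norm estimates through functional calculus. The key lemma, used already implicitly in Theorem \ref{Th approx P bounded B}, is that for a bounded self-adjoint $\Lambda$ with $\sigma(\Lambda)\subset(-R,R)$ and a continuous matrix function $f$ one has $\|f(\Lambda)\|_{\mathcal L(X^{n})}\le \sup_{\lambda\in\sigma(\Lambda)}|f(\lambda)|$ (spectral theorem). Applying this to $f=q-q_{N}$ gives $\|q(\Lambda)-q_{N}(\Lambda)\|\le C_{1}(q)\eta$, and then composing with the fixed bounded isomorphisms $\Phi_{U}$ (respectively $\Phi_{V},\Phi_{V^{\prime}}^{-1}$ for $P$) yields $\|Q-Q_{N}\|_{\mathcal L(V^{\prime},U)}\le C'_{3}\eta$ and $\|P-P_{N}\|_{\mathcal L(V^{\prime},V)}\le C_{3}\eta$ with $C_{3},C'_{3}$ depending only on the norms of the $\Phi$'s and the $C_{1}$ constants. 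Here one must be slightly careful about the mapping spaces: $P$ maps $V^{\prime}\to V$, so the relevant bound is $\|P-P_{N}\|_{\mathcal L(V^{\prime},V)}\le\|\Phi_{V}\|\,\|p(\Lambda)-p_{N}(\Lambda)\|\,\|\Phi_{V^{\prime}}^{-1}\|$, which is exactly the norm appearing in the statement.

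For the second pair of bounds I would analyze the Cauchy-quadrature step. Since $q_{N}$ is rational, it is holomorphic in a neighborhood of the compact set $\overline{I_{\sigma }}$, so the Cauchy formula $q_{N}(\Lambda)=\frac{1}{2\pi i}\oint_{\xi} q_{N}(\xi)(\xi I-\Lambda)^{-1}\,d\xi$ holds for any contour $\xi$ enclosing $\sigma(\Lambda)$ while leaving the poles of $q_{N}$ outside; parameterizing $\xi$ by $\theta\in[0,2\pi]$ turns this into $\frac{1}{2\pi}\int_{0}^{2\pi} g(\theta)\,d\theta$ with $g(\theta)=-i\xi'(\theta)q_{N}(\xi(\theta))(\xi(\theta) I-\Lambda)^{-1}z$, and the real part of the integrand applied to $z$ is precisely $v_{1}$ as defined by system (\ref{transfo}). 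Then $q_{N}(\Lambda)z-q_{N,M}(\Lambda)z=\frac{1}{2\pi}(I(g)-I_{M}(g))$, and the quadrature estimate (\ref{estim2}) gives $\|q_{N}(\Lambda)-q_{N,M}(\Lambda)\|\le C_{2}\eta$ once one checks that $C_{2}(g)$ can be bounded uniformly in $z$: this follows because $g(\theta)$ depends on $z$ only through the bounded operator $(\xi(\theta)I-\Lambda)^{-1}$, whose norm is controlled by $\mathrm{dist}(\xi(\theta),\sigma(\Lambda))^{-1}$, uniformly in $\theta$ on the fixed contour. Combining with the first estimate through the triangle inequality and composing with $\Phi_{U},\Phi_{V},\Phi_{V^{\prime}}^{-1}$ delivers $\|P-P_{N,M}\|_{\mathcal L(V^{\prime},V)}\le C_{4}\eta$ and $\|Q-Q_{N,M}\|_{\mathcal L(V^{\prime},U)}\le C'_{4}\eta$.

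The main obstacle I anticipate is not any single estimate but the bookkeeping of which Hilbert space each operator acts between: in the unbounded-control setting $A\in\mathcal L(V,V^{\prime})$, $B\in\mathcal L(U,V^{\prime})$, $P\in\mathcal L(V^{\prime},V)$, and the \textbf{ARE} (\ref{eq.Riccati.algebriquenonborne}) carries the extra conjugating factors $\phi_{V^{\prime}}^{-1}\phi_{V}$ and $\phi_{Z}c^{T}c\phi_{Z}^{-1}$. One must verify that the continuous functional calculus argument still applies cleanly — i.e. that $p(\lambda)$ genuinely solves (\ref{eq.Riccati.algebriquenonborne}) pointwise and is continuous — and that the identity $Q=\Phi_{U}q(\Lambda)$ with $q(\Lambda)=b^{T}(\Lambda)\phi_{V^{\prime}}^{-1}\phi_{V}p(\Lambda)\phi_{V^{\prime}}^{-1}$ from Theorem \ref{Th formul P unbounded B} is the one being approximated, so that no hidden unbounded factor spoils the $\mathcal L(V^{\prime},U)$ bound. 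Once this is disentangled, every inequality is a verbatim transcription of the bounded case with the constants absorbing $\|\Phi_{V}\|$, $\|\Phi_{V^{\prime}}^{-1}\|$, $\|\Phi_{U}\|$ and the (bounded, by continuity and compactness of $\overline{I_{\sigma}}$) sup-norms of $b^{T}$, $\phi_{V^{\prime}}^{-1}$ and $\phi_{V}$ on $I_{\sigma}$.
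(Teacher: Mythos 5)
Your proposal is correct and follows essentially the same route as the paper: the paper's own proof of Theorem \ref{Th Approx P unbounded B} is literally the single remark that it ``is similar to the one of Theorem \ref{Th approx P bounded B}'', i.e.\ the rational approximation estimate (\ref{estim1}) converted to an operator bound via Lemma \ref{funct calc} (\emph{7}), the Cauchy-formula/quadrature estimate (\ref{estim2}), the triangle inequality, and composition with the bounded isomorphisms. Your write-up simply makes explicit the bookkeeping of the spaces $V$, $V^{\prime}$, $U$ and the continuity of $q$ that the paper leaves implicit.
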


\begin{remark}
An example of unbounded control operators is given in the Subsection \ref%
{exemple 2}.
\end{remark}

The approximations of $\uu$ and $\uu_{h}$ are constructed using the
same method as in the case of bounded control operators.

\section{Extensions\label{Extensions}}

\label{extension} In this section, we mention possible extensions of
the theoretical framework presented above.

The same strategy applies directly to dynamic estimators and
compensators derived by the $H_{2}$ to the $H_{\infty }$ theories.
For instance, the condition $\rho \left( P\overline{P}\right)
<\gamma $ on the spectral radius of the product of the solution of
the two Riccati equation can be expressed under the form of a
condition on the spectral radius of the product of two parameterized
matrices $\rho \left( p\left( \lambda \right) \overline{p}\left(
\lambda
\right) \right) <\gamma $ for all $\lambda \in I_{\sigma }$, see Lemma \ref%
{funct calc generalized} (6).

\medskip

The spectral theory of self-adjoint operators has been chosen for
its relative simplicity. We are aware of its limitation, so we
mention possible extensions based on more general functional calculi
like these developped in \cite{MarSan} or \cite{Haa} to cite only
two.

\medskip

Other frameworks for the well-posedness of the LQR problem can be
used. In particular, this of \cite{Lasiecka} for optimal control
with unbounded observation and control may be incorporated in this
approach.

\section{Proofs\label{Proofs}}

First, we remark that for $E$ and $F$ two Hilbert spaces and $\Phi $
an
isomorphism from $E$ to $F,$ if $F$ is equipped with the inner product $%
(z,z^{\prime })_{F}=(\Phi ^{-1}z,\Phi ^{-1}z^{\prime })_{E}$ then
$\Phi ^{\ast }=\Phi ^{-1}$. In the next lemma, we state few
functional calculus properties.

\begin{lemma}
\label{funct calc}For $\Lambda $ a self-adjoint operator on a
separable Hilbert space $X$, and for $f$, $g$ two functions
continuous on $I_{\sigma }$

\begin{enumerate}
\item $f(\Lambda )$ is self-adjoint;

\item for $\mu \in \mathbb{R}$, $(\mu f)(\Lambda )=\mu f(\Lambda )$ on $%
D(f(\Lambda ))$;

\item $(f+g)(\Lambda )=f(\Lambda )+g(\Lambda )$ on $D(f(\Lambda ))\cap
D(g(\Lambda ))$;

\item $g(\Lambda )f(\Lambda )=(g$ $f)(\Lambda )$ when the range of $%
f(\Lambda )$ is included in $D(g(\Lambda ))$;

\item if $f\neq 0$ in $I_{\sigma }$ then $f(\Lambda )^{-1}$ exists and is
equal to $\frac{1}{f}(\Lambda )$;

\item if $f(\lambda )\geq 0$ for all $\lambda \in I_{\sigma }$ then $%
f(\Lambda )\geq 0$;

\item $||f(\Lambda )x||_{X}^{2}$ $\leq $ $\sup\limits_{\lambda \in I_{\sigma
}}|f(\lambda )|^{2}||x||_{X}^{2}$ for all $x\in D(f(\Lambda ))$.
\end{enumerate}
\end{lemma}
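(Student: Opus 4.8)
The plan is to obtain all seven items from the two descriptions of $f(\Lambda)$ recalled in Section~\ref{Notations}. In both the discrete-spectrum case (expansion on an orthonormal basis of eigenvectors $\phi_k$) and the general case (the spectral family $E_\lambda$), $f(\Lambda)$ acts, after the natural unitary identification, as multiplication by $\lambda\mapsto f(\lambda)$ --- on $\ell^2$, the sequences $z_k=(z,\phi_k)_X$ with counting measure on $\{\lambda_k\}$, respectively on $L^2$ of the scalar measure $d\|E_\lambda z\|_X^2$. So each assertion reduces to an elementary pointwise fact about the continuous function $f$ on $\sigma(\Lambda)\subset I_\sigma$, and I would refer to \cite{DauLio} for the spectral-integral machinery invoked along the way.

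First I would settle the ``diagonal'' items (1), (2), (3) and (7), which do not involve composition. Since $f$ is real-valued, multiplication by $f(\lambda)$ is symmetric and the domain prescribed in Section~\ref{Notations} is exactly the maximal one on which the image remains in $X$, giving the self-adjointness (1). Linearity of the finite sum, respectively of the Stieltjes integral, yields (2) and (3) on the intersection of domains indicated. Item (7) is the estimate $\|f(\Lambda)x\|_X^2=\sum_k|f(\lambda_k)|^2|x_k|^2\le(\sup_{I_\sigma}|f|^2)\sum_k|x_k|^2$, respectively $\int|f(\lambda)|^2\,d\|E_\lambda x\|_X^2\le(\sup_{I_\sigma}|f|^2)\int d\|E_\lambda x\|_X^2$, combined with $\sum_k|x_k|^2=\|x\|_X^2$, respectively $\int d\|E_\lambda x\|_X^2=\|x\|_X^2$.

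Next, item (6) follows directly from $(f(\Lambda)x,x)_X=\int f(\lambda)\,d\|E_\lambda x\|_X^2\ge 0$ (respectively $\sum_k f(\lambda_k)|x_k|^2\ge 0$) whenever $f\ge 0$ on $I_\sigma$. For the product rule (4), the key identity is that the spectral measure of $f(\Lambda)x$ is $d\|E_\lambda f(\Lambda)x\|_X^2=|f(\lambda)|^2\,d\|E_\lambda x\|_X^2$; granting this, for $x$ in the natural domain (that is, $x\in D(f(\Lambda))$ with $f(\Lambda)x\in D(g(\Lambda))$, which under the range hypothesis is all of $D(f(\Lambda))$) the action of $g(\Lambda)f(\Lambda)$ is multiplication by $g(\lambda)f(\lambda)$, hence it agrees with $(gf)(\Lambda)$ there. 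Item (5) is then a corollary: if $f$ is continuous and nowhere zero on $I_\sigma$ then $1/f$ is continuous on $I_\sigma$, the same spectral-measure identity shows that the range of $f(\Lambda)$ is automatically contained in $D((1/f)(\Lambda))$ and conversely, so (4) applied in both orders gives $(1/f)(\Lambda)f(\Lambda)=f(\Lambda)(1/f)(\Lambda)=\mathbf 1(\Lambda)=I$; thus $f(\Lambda)^{-1}$ exists and equals $(1/f)(\Lambda)$, an operator that may be unbounded when $\sigma(\Lambda)$ is unbounded and $\inf_{\sigma(\Lambda)}|f|=0$.

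The only genuinely delicate point is the spectral-measure identity $d\|E_\lambda f(\Lambda)x\|_X^2=|f(\lambda)|^2\,d\|E_\lambda x\|_X^2$ underlying (4), (5) and implicitly (7), together with the attendant need to pin down the natural domains in (4) so that the stated equalities hold on them; once this is in place every item is a short computation, and in the bounded-spectrum setting of Assumption~(H3), where all functions involved are bounded on $\sigma(\Lambda)$, even these domain subtleties disappear.
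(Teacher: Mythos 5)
Your treatment of items (1), (2), (3), (5) and (7) is fine and, where the paper actually writes out a proof, essentially identical to it: for (7) the paper gives exactly your computation $\|f(\Lambda)x\|_X^2=\int_{\sigma_{\min}}^{\sigma_{\max}}|f(\lambda)|^2\,d\|E_\lambda x\|_X^2\le\sup_{I_\sigma}|f|^2\,\|x\|_X^2$, and for (1)--(5) it simply cites \cite{DauLio}, so your multiplication-operator sketch is if anything more explicit than the paper.

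The genuine gap is in item (6). Throughout Section~\ref{Notations} and in all later uses (the nonnegativity of $p(\Lambda)$ in the proof of Theorem~\ref{Th formul P bounded B}, via Lemma~\ref{funct calc generalized}~(5)), $f$ is a \emph{matrix} of continuous functions and ``$f(\lambda)\ge 0$'' means positive semidefinite; correspondingly ``$f(\Lambda)\ge 0$'' means that the operator matrix is nonnegative on $X^n$, i.e.\ $\sum_{i,j=1}^{n}(f_{ij}(\Lambda)z_j,z_i)_X\ge 0$. This is precisely the statement the paper proves, and it is not a componentwise or scalar fact, so your reduction to ``multiplication by a nonnegative scalar function'' does not cover it: the entries $f_{ij}(\lambda)$ need not be individually nonnegative. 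The paper's argument approximates each spectral integral by a Riemann sum and observes that
$\sum_{i,j=1}^{n}f_{ij}(\lambda_k')\bigl((E_{\lambda_{k+1}}-E_{\lambda_k})z_j,z_i\bigr)_X=\sum_{i,j=1}^{n}f_{ij}(\lambda_k')(y_j^k,y_i^k)_X\ge 0$
because it is the pairing of the positive semidefinite matrix $f(\lambda_k')$ against the Gram matrix of the vectors $y_i^k=(E_{\lambda_{k+1}}-E_{\lambda_k})z_i$; summing over $k$ and passing to the limit gives the result. Your scalar computation $(f(\Lambda)x,x)_X=\int f(\lambda)\,d\|E_\lambda x\|_X^2$ is the $n=1$ special case of this, but you need either the Riemann-sum/Gram-matrix argument above or an explicit factorization $f=g^Tg$ with $g$ continuous (then invoke your items (1) and (4)) to get the matrix version actually used downstream. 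The rest of your domain discussion for (4) and (5) is sound and, as you note, becomes moot under Assumption (H3).
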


\begin{proof}
The proofs of the first five statements can be found in
\cite{DauLio}. We
prove (\emph{6}) i.e. that $\displaystyle\sum\limits_{i,j=1}^{n}(f_{ij}(%
\Lambda )z_{j},z_{i})_{X}\geq 0$. First, assume that $I_{\sigma }$
is bounded. We recall that for a function $g$ continuous on
$I_{\sigma }$ and for $z\in X,$ the integral $\int_{\sigma _{\min
}}^{\sigma _{\max }}g(\lambda )dE_{\lambda }z$ is defined as the
strong limit in $X$ of the
Riemann sums, see \cite{DauLio}, $\displaystyle\sum\limits_{k=1}^{p}g(%
\lambda _{k}^{\prime })(E_{\lambda _{k+1}}-E_{\lambda _{k}})z$ when $%
\max\limits_{k}|\lambda _{k+1}-\lambda _{k}|$ vanishes, where
$\lambda _{k}^{\prime }\in \lbrack \lambda _{k},\lambda _{k+1}]$ and
$\sigma _{\min
}=\lambda _{1}<\lambda _{2}...<\lambda _{p}=\sigma _{\max }$. When $%
I_{\sigma }$ is not bounded, we use a subdivision of a bounded interval $I_{%
\widetilde{\sigma }}=(\widetilde{\sigma }_{\min },\widetilde{\sigma
}_{\max })$ and the integral $\int_{\sigma _{\min }}^{\sigma _{\max
}}g(\lambda )dE_{\lambda }z$ is defined by passing to the limit in
the integral bounds.
Let us establish that the Riemann sum $\displaystyle\sum\limits_{i,j=1}^{n}%
\sum\limits_{k=1}^{p}f_{ij}(\lambda _{k}^{\prime })((E_{\lambda
_{k+1}}z_{j},z_{i})-(E_{\lambda _{k}}z_{j},z_{i}))$ is nonnegative,
so the result will follow by passing to the limit. Since
$(E_{\lambda _{k+1}}z_{j},z_{i})-(E_{\lambda
_{k}}z_{j},z_{i})=((E_{\lambda
_{k+1}}-E_{\lambda _{k}})z_{j},z_{i})=(y_{j}^{k},y_{i}^{k})$ where $%
y_{j}^{k}=(E_{\lambda _{k+1}}-E_{\lambda _{k}})z_{j},$ then the
Riemann sum
is the sum over $k$ of the nonnegative terms $\displaystyle%
\sum\limits_{i,j=1}^{n}f_{ij}(\lambda _{k}^{\prime
})(y_{j}^{k},y_{i}^{k})$ which in turn is nonnegative.

Now we prove (\emph{7}):
\begin{eqnarray*}
||f(\Lambda ))x||_{X}^{2} =\int_{\sigma _{\min }}^{\sigma _{\max
}}|f(\lambda )|^{2}\text{ }d||E_{\lambda }x||_{X}^{2} &\leq
&\sup_{\lambda \in I_{\sigma }}|f(\lambda )|^{2}\int_{\sigma _{\min
}}^{\sigma _{\max
}}d||E_{\lambda }x||_{X}^{2} \\
&\leq &\sup_{\lambda \in I_{\sigma }}|f(\lambda )|^{2}||x||_{X}^{2}.
\end{eqnarray*}
\end{proof}

For two integers $n_{E}$, $n_{F}$, a $n_{E}\times n_{F}$ matrix $f$
of functions continuous on $I_{\sigma }$ and two Hilbert spaces $E$,
$F$ isomorphic with $X^{n_{E}}$ and $X^{n_{F}}$ by $\Phi _{E}^{-1}$
and $\Phi _{F}^{-1}$ respectively, we introduce the so-called
generalized matrix of functions of $\Lambda $: $f^{\phi }(\Lambda
)=\Phi _{E}f(\Lambda )\Phi _{F}^{-1}\in \mathcal{L}(F,E)$ with
domain $D(f^{\phi }(\Lambda ))=\Phi _{F}D(f(\Lambda ))$. For the
sake of shortness, the spaces $E$ and $F$ do not appear explicitly
in the notation $f^{\phi }$, so they will be associated to each
matrix at the beginning of their use. Then, no confusion will be
possible. In the next lemma, we state some properties of generalized
matrices of functions.

\begin{lemma}
\label{funct calc generalized}For any generalized matrices of functions of $%
\Lambda ,$ $f^{\phi }(\Lambda )=\Phi _{E}f(\Lambda )\Phi _{F}^{-1}$ and $%
g^{\phi }(\Lambda )=\Phi _{E}g(\Lambda )\Phi _{F}^{-1}$, and any
real number $\mu $,

\begin{enumerate}
\item $(f^{\phi }(\Lambda ))^{\ast }=(f^{T})^{\phi }(\Lambda )$;

\item $\mu f^{\phi }(\Lambda )=(\mu f)^{\phi }(\Lambda )$ on $D(f^{\phi
}(\Lambda ))$;

\item $f^{\phi }(\Lambda )+g^{\phi }(\Lambda )=(f+g)^{\phi }(\Lambda )$ on $%
D(f^{\phi }(\Lambda ))\cap D(g^{\phi }(\Lambda ))$;

\item for another Hilbert space $G$ and $g^{\phi }(\Lambda )=\Phi
_{F}g(\Lambda )\Phi _{G}^{-1}$, $f^{\phi }(\Lambda )g^{\phi
}(\Lambda )=(fg)^{\phi }(\Lambda )=\Phi _{E}(fg)(\Lambda )\Phi
_{G}^{-1}$ when the range $R(f^{\phi }(\Lambda ))\subset D(g^{\phi
}(\Lambda ))$;

\item when $F=E,$ if $f(\lambda )\geq 0$ for all $\lambda \in I_{\sigma }$
then $f^{\phi }(\Lambda )\geq 0$;

\item $\sigma (f^{\phi }(\Lambda ))=\sigma (f)$.
\end{enumerate}
\end{lemma}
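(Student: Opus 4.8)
The plan is to reduce each item to the corresponding scalar statement of Lemma~\ref{funct calc}, read entrywise, together with two elementary facts: with the inner products fixed above each isomorphism satisfies $\Phi^{\ast}=\Phi^{-1}$, and composition of generalized matrices of functions telescopes through $\Phi_{F}^{-1}\Phi_{F}=I$. First I would record the matrix forms of Lemma~\ref{funct calc}: viewing $f(\Lambda)=(f_{ij}(\Lambda))$ as an operator $X^{n_{F}}\to X^{n_{E}}$, its adjoint has $(j,i)$ entry $(f_{ij}(\Lambda))^{\ast}=f_{ij}(\Lambda)$ by Lemma~\ref{funct calc}(1), hence $(f(\Lambda))^{\ast}=(f^{T})(\Lambda)$; similarly Lemma~\ref{funct calc}(2)--(4),(7) hold entrywise, yielding $(\mu f)(\Lambda)=\mu f(\Lambda)$, $(f+g)(\Lambda)=f(\Lambda)+g(\Lambda)$, $f(\Lambda)g(\Lambda)=(fg)(\Lambda)$ on the natural domains, and $\|f(\Lambda)x\|\le(\sum_{ij}\sup_{\lambda\in I_{\sigma}}|f_{ij}(\lambda)|^{2})^{1/2}\|x\|$.

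Granting this, items (1)--(5) are short. For (1), $(\Phi_{E}f(\Lambda)\Phi_{F}^{-1})^{\ast}=(\Phi_{F}^{-1})^{\ast}(f(\Lambda))^{\ast}\Phi_{E}^{\ast}=\Phi_{F}(f^{T})(\Lambda)\Phi_{E}^{-1}=(f^{T})^{\phi}(\Lambda)$, using $\Phi^{\ast}=\Phi^{-1}$. Items (2) and (3) are just linearity of $\Phi_{E}$ and of $\Phi_{F}^{-1}$. For (4), $f^{\phi}(\Lambda)g^{\phi}(\Lambda)=\Phi_{E}f(\Lambda)(\Phi_{F}^{-1}\Phi_{F})g(\Lambda)\Phi_{G}^{-1}=\Phi_{E}(fg)(\Lambda)\Phi_{G}^{-1}$, the hypothesis $R(f^{\phi}(\Lambda))\subset D(g^{\phi}(\Lambda))$ being exactly $R(f(\Lambda))\subset D(g(\Lambda))$ transported by the isomorphisms. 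For (5) with $F=E$, unwinding the inner product of $E$ gives $(f^{\phi}(\Lambda)z,z)_{E}=(f(\Lambda)\Phi_{E}^{-1}z,\Phi_{E}^{-1}z)_{X^{n_{E}}}\ge 0$ by Lemma~\ref{funct calc}(6).

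The substance is (6), where necessarily $F=E$ so that both spectra are defined. Since $\Phi_{E}$ is an isomorphism, $f^{\phi}(\Lambda)=\Phi_{E}f(\Lambda)\Phi_{E}^{-1}$ is similar to $f(\Lambda)$, so $\sigma(f^{\phi}(\Lambda))=\sigma(f(\Lambda))$, and it remains to prove the spectral mapping $\sigma(f(\Lambda))=\sigma(f)$, with $\sigma(f):=\overline{\bigcup_{\lambda\in\sigma(\Lambda)}\sigma(f(\lambda))}$. For the inclusion $\supset$, given $\zeta\in\sigma(f(\lambda_{0}))$ with $\lambda_{0}\in\sigma(\Lambda)$, I would pick a unit eigenvector $w\in\mathbb{C}^{n}$ of the matrix $f(\lambda_{0})$ for $\zeta$ and, for $\varepsilon>0$, a unit vector $x$ in the range of the spectral projection $E_{(\lambda_{0}-\varepsilon,\lambda_{0}+\varepsilon)}$ (nonzero because $\lambda_{0}\in\sigma(\Lambda)$); then $z=(w_{j}x)_{j}$ has $\|z\|=1$ and, using $f(\lambda_{0})w=\zeta w$, the spectral localization of $x$, and continuity of the $f_{ij}$, one gets $\|(\zeta I-f(\Lambda))z\|\le\sum_{ij}|w_{j}|\sup_{|\lambda-\lambda_{0}|\le\varepsilon}|f_{ij}(\lambda)-f_{ij}(\lambda_{0})|\to 0$ as $\varepsilon\to 0$, so $\zeta$ is an approximate eigenvalue of $f(\Lambda)$; since $\sigma(f(\Lambda))$ is closed this gives $\sigma(f)\subset\sigma(f(\Lambda))$. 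For the inclusion $\subset$, if $\zeta\notin\sigma(f)$ then $\lambda\mapsto(\zeta I_{n}-f(\lambda))^{-1}$ is continuous and bounded on $\sigma(\Lambda)$, so the generalized matrix it generates is a bounded operator by the matrix form of Lemma~\ref{funct calc}(7), and by the matrix forms of Lemma~\ref{funct calc}(4)--(5) it is a two-sided inverse of $\zeta I-f(\Lambda)=(\zeta I_{n}-f)(\Lambda)$; hence $\zeta\notin\sigma(f(\Lambda))$.

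The hard part is (6), precisely the uniform boundedness of $\lambda\mapsto(\zeta I_{n}-f(\lambda))^{-1}$ used for the inclusion $\subset$: under (H3) the spectrum $\sigma(\Lambda)$ is compact and the $f_{ij}$ are continuous there, so it is automatic; without boundedness of $\Lambda$ one should either restrict (6) to that case or read $\sigma(f)$ and the continuity hypotheses so that this uniform control is guaranteed. A secondary, purely cosmetic point is that $f(\lambda)$ need not be symmetric, so $\sigma(f(\lambda))$ is genuinely complex and $f(\lambda)$ may fail to be diagonalizable --- harmless, since the approximate-eigenvector construction needs only one eigenvector, which always exists. Everything else is routine domain bookkeeping in the entrywise lifting of Lemma~\ref{funct calc}.
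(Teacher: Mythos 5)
Your treatment of items (1)--(5) coincides with the paper's: the paper likewise disposes of (1)--(4) as ``direct consequences of Lemma~\ref{funct calc}'' and proves (5) by exactly the computation you give, $(f^{\phi}(\Lambda)z,z)_{E}=(f(\Lambda)\Phi_{E}^{-1}z,\Phi_{E}^{-1}z)_{X^{n_{E}}}$, invoking Lemma~\ref{funct calc}. Where you genuinely diverge is item (6): the paper dismisses it in one line as ``a direct consequence of the definition of the spectrum of an operator,'' whereas you supply an actual spectral-mapping argument --- similarity via $\Phi_{E}$ reduces to $\sigma(f(\Lambda))=\sigma(f)$, then approximate eigenvectors $z=(w_{j}x)_{j}$ built from a matrix eigenvector $w$ of $f(\lambda_{0})$ and a spectrally localized $x$ give the inclusion $\supset$, and a uniformly bounded resolvent matrix $(\zeta I_{n}-f(\lambda))^{-1}$ fed back through the functional calculus gives $\subset$. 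This is a correct and worthwhile elaboration of a step the paper leaves entirely implicit, and your two caveats are well taken: the inclusion $\subset$ really does need compactness of $\sigma(\Lambda)$ (or some uniform control) to bound the resolvent, which the paper's standing assumption (H3) provides but the lemma as stated does not; and since the $f_{ij}$ are real-valued while $\sigma(f(\lambda))$ may be complex, the argument tacitly complexifies $X$ and extends the functional calculus to complex-valued symbols --- slightly more than cosmetic, though standard. You also make precise what $\sigma(f)$ must mean (the closure of $\bigcup_{\lambda\in\sigma(\Lambda)}\sigma(f(\lambda))$), which the paper never defines; note that the paper's later use in the Extensions section quantifies over all of $I_{\sigma}$ rather than $\sigma(\Lambda)$, and your version is the one that is actually correct.
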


\begin{proof}
The properties (\emph{1-4}) are direct consequences of Lemma \ref{funct calc}%
. For the derivation of (\emph{5}) we remark that for $z\in
D(f^{\phi}(\Lambda ))\subset E\text{, }(f^{\phi }(\Lambda
)z,z)_{E}=(f(\Lambda )\Phi _{E}^{-1}z,\Phi _{E}^{-1}z)_{X^{n_{E}}}$
which is nonnegative if $f(\Lambda ) $ is nonnegative. The
conclusion uses Lemma \ref{funct calc} (\emph{5}). Finally, the
derivation of (\emph{6}) is a direct consequence of the definition
of the spectrum of an operator.
\end{proof}

\begin{proof}[Proof of Theorem \protect\ref{Th formul P bounded B}]
From Lemma \ref{funct calc generalized} (\emph{1}) and (\emph{4}),
\begin{equation*}
A^{\ast }=\Phi _{Z}a^{T}(\Lambda )\Phi _{Z}^{-1},\quad BB^{\ast
}=\Phi _{Z}b(\Lambda )b^{\ast }(\Lambda )\Phi _{Z}^{-1}\quad
\text{and} \quad C^{\ast }C=\Phi _{Z}c^{\ast }(\Lambda )c(\Lambda
)\Phi _{Z}^{-1}
\end{equation*}%
are some generalized matrices of functions of $\Lambda $ on $Z$. We
write
\begin{equation*}
e(\lambda )=a^{T}(\lambda )~p(\lambda )+p(\lambda )~a(\lambda
)-p(\lambda )~b(\lambda )b^{T}(\lambda )~p(\lambda )+c^{T}(\lambda
)c(\lambda ),
\end{equation*}%
so by construction $e(\lambda )=0$ and $e(\Lambda )=0.$ Multiplying
the last equality by $\Phi _{Z}$ to the left and by $\Phi _{Z}^{-1}$
to the right, using Lemma \ref{funct calc generalized} (\emph{3})
and (\emph{4}), and
posing $\widetilde{P}=\Phi _{Z}~p(\Lambda )~\Phi _{Z}^{-1}$ we find that $%
\widetilde{P}$ satisfies the Riccati equation (\ref{eq.Riccati.opercontinuz}%
). Next, the nonnegativity and symmetry of $p$ with Lemma \ref{funct
calc generalized} (\emph{1}) and (\emph{5}) yield the nonnegativity
and
self-adjointness of $\widetilde{P}$. Finally, we conclude that $P=\widetilde{%
P}$ thanks to uniqueness of the solution, so $\uu=-Qz$ where
$Q=S^{-1}B^{\ast }\widetilde{P}=\Phi _{U}s^{-1}(\Lambda
)b^{T}(\Lambda )p(\Lambda )\Phi _{Z}^{-1}$.
\end{proof}

\begin{proof}[Proof of Theorem \protect\ref{Th approx P bounded B}]
The estimate $||q(\Lambda )-q_{N}(\Lambda )||_{\mathcal{L}\left(
X^{n_{Z}},X^{n_{U}}\right) }$ results from (\ref{estim1}) and Lemma \ref%
{funct calc} (\emph{7}). In the following, we derive the estimate
\begin{equation*}
||q_{N}(\Lambda )-q_{N,M}(\Lambda )||_{\mathcal{L}\left(
X^{n_{Z}},X^{n_{U}}\right) }\leq C_{5}\eta .
\end{equation*}%
Since $q_{N}$ is holomorphic in $\mathbb{C}$ and $\Lambda $ is a
bounded operator on $X$ with a spectrum included in $(-R,R)$,
$p_{N}(\Lambda )$ may be represented by the Cauchy formula, see
\cite{Yos},
\begin{equation*}
q_{N}(\Lambda )=\frac{1}{2i\pi }\int_{\mathcal{C}(R)}q_{N}(\xi )(\xi
I-\Lambda )^{-1}d\xi
\end{equation*}%
where $\mathcal{C}(R)\subset \mathbb{C}$, provided that all its
poles are out of the contour $\mathcal{C}(R)$. By choosing $\xi $,
function of $\theta $, with $\theta \in (0,2\pi )$ as a
parametrization of $\mathcal{C}(R)$, we find
\begin{equation*}
q_{N}(\Lambda )=\frac{1}{2\pi }\int_{0}^{2\pi }-i\xi ^{\prime
}q_{N}(\xi )(\xi I-\Lambda )^{-1}d\theta .
\end{equation*}%
Then, we use the quadrature formula to approximate $q_{N}(\lambda )$
by
\begin{equation*}
q_{N,M}(\lambda )=\frac{1}{2\pi }I_{M}(-i\xi ^{\prime }q_{N}(\xi
)(\xi -\lambda )^{-1}).
\end{equation*}%
Combining the estimate (\ref{estim2}) and Lemma \ref{funct calc} (5)
yields the wanted estimate. The triangular inequality yields
\begin{eqnarray*}
\left\Vert q(\Lambda )-q_{N,M}(\Lambda )\right\Vert
_{\mathcal{L}\left( X^{n_{Z}},X^{n_{U}}\right) } &\leq &\left\Vert
q(\Lambda )-q_{N}(\Lambda
)\right\Vert _{\mathcal{L}\left( X^{n_{Z}},X^{n_{U}}\right) } +\left\Vert q_{N}(\Lambda )-q_{N,M}(\Lambda )\right\Vert _{\mathcal{L}%
\left( X^{n_{Z}},X^{n_{U}}\right) } \\
&\leq &(C_{3}+C_{5})\eta = C_{4}\eta
\end{eqnarray*}%
with $C_{4}=C_{3}+C_{5}$. Consequently, the expression (\ref{pNM}) of $%
q_{N,M}(\Lambda )z$ is obtained by posing $v^{\ell }=-i\xi _{\ell
}^{\prime }q_{N}(\xi _{\ell })(\xi _{\ell }-\Lambda )^{-1}z$.
\end{proof}

\begin{remark}
\label{Rq ellipse}The implementation of the Cauchy integral formula
requires that the function is holomorphic inside the contour. In the
case of an unknown function like the function $q$, it is generally
difficult to determine its domain of holomorphy, so it is easier to
use a rational approximation $q_{N}$ whose poles are under control.
\end{remark}

\begin{proof}[Proof of Theorem \protect\ref{Th formul P unbounded B}]
The derivation of the expression $A^{\ast }=J^{-1}\Phi _{V}a^{\ast
}(\Lambda
)\Phi _{V^{\prime }}^{-1}J^{-1}$ is straightforward provided that $%
\left\langle u,v\right\rangle _{V^{\prime
},V}=(Ju,v)_{V}=(u,J^{-1}v)_{V^{\prime }}$. Since $J=\Phi _{V}\Phi
_{V^{\prime }}^{-1}$ this expression is simplified as $A^{\ast
}=\Phi _{V^{\prime }}a^{\ast }\Phi _{V}^{-1}$. Then,
(\ref{eq.Riccatinonborne}) is equivalent to
\begin{gather*}
\left[ \phi _{V^{\prime }}(\Lambda )a^{\ast }(\Lambda )P\phi
_{V^{\prime }}^{-1}(\Lambda )\phi _{V}(\Lambda )+\phi _{V}(\Lambda
)Pa(\Lambda )\right.
\\
-\phi _{V}(\Lambda )Pb(\Lambda )b^{\ast }(\Lambda )P\phi _{V^{\prime
}}^{-1}(\Lambda )\phi _{V}(\Lambda ) \\
+\left. \phi _{Z}c^{\ast }(\Lambda )c(\Lambda )\phi _{Z}^{-1}\phi
_{V}\right] x=0.
\end{gather*}
Finally, the complete proof follows the same steps as in Theorem
\ref{Th formul P bounded B}.
\end{proof}

The proof of Theorem \ref{Th Approx P unbounded B} is similar to the
one of Theorem \ref{Th approx P bounded B}.

\section{Applications and Numerical Results\label{Applications}}

\label{Application}

We present four applications to illustrate different aspects of the
theory. In Examples 1, 3 and 4, the input operator $B$ is bounded
when in Example 2 it is not. Then, we consider cases where the
operators $B$ and $C$ are functions of $\Lambda $ (Examples 1, 2 and
4), and a case where it is not (Example 3). Most examples are
devoted to internal control, nevertheless through the example of
Subsection \ref{exemple 4} it is shown how to tackle a boundary
control problem. In almost all cases, efficient algorithms are
described. The presentation of the examples 1, 3 and 4 follows the
same plan with three sub-Sections. The first one includes the state
equation, the functional to be minimized and some semi-decentralized
controls resulting of our approach. Their derivation is detailed in
the second sub-Section. As for the third, it discusses numerical
results.

The functional analysis is carried out in Sobolev spaces defined for
any
integer $k\in \mathbb{N}^{\ast }$ and any domain $\Omega \subset \mathbb{R}%
^{d}$ by
\begin{eqnarray*}
H^{k}(\Omega ) &=&\{v\in L^{2}(\Omega )\quad |\quad \nabla ^{j}v\in
L^{2}(\Omega )^{d^{j}}\quad \text{for all}\quad 1\leq j\leq k\} \\
\text{and }H_{0}^{k}(\Omega ) &=&\{v\in H^{k}(\Omega )\quad |\quad
\nabla ^{j}v=0\text{ on }\partial \Omega \quad \text{for all}\quad
0\leq j\leq k-1\}.
\end{eqnarray*}%
The boundary $\partial \Omega$ of $\Omega$ is always assumed to be
sufficiently regular to avoid any singularity and thus to simplify
the
choice of the isomorphisms $\Phi $. Its outward unit normal is denoted by $%
\nu $. For $N\in \mathbb{N}$, $\mathbb{P}_{N}$ represents the set of $N^{%
\text{th}}$-order polynomials.

\subsection{Example 1: The heat equation with a bounded control operator}

\label{exemple 1}

In this example, observation and control operators are bounded.

\subsubsection{The state equation and a choice of semi-decentralized
controllers}

Consider a system modeled by the heat equation posed in a domain
$\Omega \subset \mathbb{R}^{d}$, with homogeneous Dirichlet boundary
conditions. The control is distributed over the whole domain, so the
state $z:=w$ is solution to the boundary value problem,
\begin{equation}
\left\{ \begin{aligned} \partial _{t} w(t,x) & = \Delta w(t,x) +
\beta u(t,x) && \text{in } \mathbb{R}^{+\ast }\times \Omega \text{,}
\\ w(t,x) & = 0 && \text{on } \mathbb{R}^{+\ast }\times \partial
\Omega , \\ w(0,x) & = w_{0} && \text{in }\Omega ,
\end{aligned}\right.  \label{exemple1}
\end{equation}%
and the functional $\mathcal{J}\left( w_{0};u\right)
=\int_{0}^{+\infty }\left\Vert w\right\Vert _{L^{2}\left( \Omega
\right) }^{2}+\left\Vert \gamma u\right\Vert _{L^{2}\left( \Omega
\right) }^{2}dt$ is to be minimized. Here, $\beta $ and $\gamma $
are two nonnegative continuous
functions in $\Omega $. We apply the theory with the self-adjoint operator $%
\Lambda =\left( -\Delta \right) ^{-1}$, defined as the inverse of
the Laplace operator $-\Delta :H_{0}^{1}(\Omega )\cap H^{2}(\Omega
)\rightarrow L^{2}(\Omega )$.

\noindent $\vartriangleright $ \textit{Linear approximation:} The
approximation (\ref{polynomial approximation}) with a first-degree
polynomial yields%
\begin{equation*}
\uu_{1}=-\frac{\beta }{\sqrt{\gamma }}(d_{0}+d_{1}\Lambda )w,
\end{equation*}%
so in the special case $\gamma =\beta =1$, $\uu_{1}$ is the solution
to the boundary value problem
\begin{equation*}
-\Delta \uu_{1}=d_{0}\Delta w-d_{1}w\text{ in }\Omega, \quad \text{with}~\uu%
_{1}=w=0\text{ on }\partial \Omega .
\end{equation*}%
In the one-dimensional case $\Omega =]0,\pi \lbrack ,$ we apply
Algorithm 1 described hereafter to find $d_{0}=2.23\times 10^{-2}$
and $d_{1}=0.407$. Such $\uu_{1}$ constitutes a semi-decentralized
control before spatial discretization. The Laplace operator i.e. the
second order derivative may be approximated by a three-point
centered finite difference scheme, with
solution $(\uu_{1,j})_{j=0,\ldots,\mathcal{N}}$ that approximates the solutions $%
\uu_{1}(x_{j})$ at the $(\mathcal{N}+1)$ nodes of a subdivision $%
(x_{j}=jh)_{j=0,\ldots,\mathcal{N}}$ with $h=\frac{\pi }{\mathcal{N}},$%
\begin{equation*}
-\left( \uu_{1,j-1}-2\uu_{1,j}+\uu_{1,j+1}\right) =d_{0}\left(
w_{j-1}-2w_{j}+w_{j+1}\right) -d_{1}h^{2}w_{j},\quad j=1,\ldots,\mathcal{N%
}-1,
\end{equation*}%
completed by the boundary conditions
$\uu_{1,0}=\uu_{1,\mathcal{N}}=0$. Here
$w_{j}=w(x_{j})$ for $j=0,\ldots,\mathcal{N}$ that satisfy $w_{0}=w_{\mathcal{N}%
}=0$. After elimination of $\uu_{1,0}$ and $\uu_{1,\mathcal{N}},$
the scheme can be written in matrix form,
\begin{equation}
\lbrack -\Delta _{h}]\uu_{1,h}=-d_{0}[-\Delta _{h}]w_{h}-d_{1}w_{h},
\label{uNh'}
\end{equation}
where $[-\Delta _{h}]=\dfrac{1}{h^{2}}%
\begin{pmatrix}
2 & -1 &  &  &  \\
-1 & 2 & \ddots &  &  \\
& \ddots & \ddots & \ddots &  \\
&  & \ddots & 2 & -1 \\
&  &  & -1 & 2%
\end{pmatrix}%
$, $\uu_{1,h}=
\begin{bmatrix}
\uu_{1,1} \\ \vdots \\ \uu_{1,\mathcal{N}-1}
\end{bmatrix}
$ and $w_{h}=
\begin{bmatrix}
w_{1} \\ \vdots \\ w_{\mathcal{N}-1}
\end{bmatrix}
$. This is the fully discretized problem of the semi-decentralized
control approximated by a linear polynomial.

\medskip

\noindent $\vartriangleright $ \textit{Approximation through the
Cauchy formula combined with a polynomial approximation:} To build
the approximated optimal control,
\begin{equation}
\uu_{N,M,h}=-\frac{1}{2\pi }\sum_{\ell =1}^{M}\omega _{\ell }v_{1,h}^{\ell }%
,  \label{app2ex1}
\end{equation}%
the approximation $v_{i,h}^{\ell }$ of $%
\begin{bmatrix}
v_{i}^{\ell }(x_{1}) & \ldots & v_{i}^{\ell }(x_{\mathcal{N}-1})%
\end{bmatrix}%
^{T}$ is computed by solving the system (\ref{Pbdiscspatiale}), that
we
rewrite in the matrix form,%
\begin{equation}
\begin{bmatrix}
\xi _{1}-[-\Delta _{h}]^{-1} & -\xi _{2} \\
\xi _{2} & \xi _{1}-[-\Delta _{h}]^{-1}%
\end{bmatrix}%
\begin{bmatrix}
v_{1,h}^{\ell } \\
v_{2,h}^{\ell }%
\end{bmatrix}%
=%
\begin{bmatrix}
\Re e\left( -i\xi ^{\prime }p_{N}\left( \xi \right) \right) w_{h} \\
\Im m\left( -i\xi ^{\prime }p_{N}\left( \xi \right) \right) w_{h}%
\end{bmatrix}
\label{vh1d}
\end{equation}%
for each quadrature point $\xi :=\xi _{1,\ell }+i\xi _{2,\ell }$, where $%
p_{N}(\lambda )$ is a polynomial approximation of $p(\lambda )$.%

\subsubsection{Construction of the semi-decentralized controllers}

We detail the derivation of the polynomial approximation
$p_{N}(\Lambda )w$ of $Pw$ required both for the linear
approximation and in (\ref{vh1d}). We set $U=Z=L^{2}\left( \Omega
\right) $ thus $A=\Delta $ is an isomorphism from its domain
$D\left( \Delta \right) =H^{2}\left( \Omega \right) \cap
H_{0}^{1}\left( \Omega \right) $ into $Z$, see \cite{Gri}. Furthermore, $%
Y=U=Z$ and $B=C=S=I$. We set $X=Z$, $\Lambda =\left[ -\Delta \right]
^{-1}$ which is compact, so it has a bounded positive spectrum with
an accumulation point at zero (but $0\not\in \sigma (\Lambda )$),
see \cite{SanSan}. Thus we can choose $\Phi _{Z}=\Phi _{Y}=\Phi
_{U}=I.$ Moreover, when $\beta =\gamma
=1$ the coefficients $a\left( \lambda \right) =-\frac{1}{\lambda }$, $%
b=c=s=1 $ are continuous on $I_{\sigma }=\left( 0,\sigma _{\max
}\right] $ and the \textbf{ARE} reads
\begin{equation}
p^{2}\left( \lambda \right) +\frac{2}{\lambda }p\left( \lambda
\right) -1=0. \label{ex1.numricc}
\end{equation}%
Its exact nonnegative solution, established only to the calculations
of errors, is%
\begin{equation}
p(\lambda )=\frac{-1+\sqrt{1+\lambda ^{2}}}{\lambda }.
\label{ex1.solricc}
\end{equation}%
We observe that $p(\lambda )$ is sufficiently regular to be
accurately
approximated by polynomials in $I_{\sigma }$. The \textbf{ARE} (\ref%
{ex1.numricc}) is equivalent to the weak formulation
\begin{equation}
\int_{I_{\sigma }}\left( \lambda p^{2}+2p-\lambda \right) \eta
\left( \lambda \right) d\lambda =0\text{ for all }\eta \in
\mathcal{C}^{0}\left( I_{\sigma }\right)  \label{algo1 riccati}
\end{equation}%
to which we apply the spectral method with Legendre polynomials (see \cite%
{BerMad1992} for instance) to find the equation satisfied by the
polynomial approximation $p_{N}$. The computation of the integral is
done exactly by using the Legendre-Gauss-Lobatto (\textbf{LGL})
quadrature formula analyzed in \cite{BerMad1991}, \cite{CroMig} and
\cite{DavRab}. The resolution of the nonlinear problem is achieved
by the iterative semi-implicit scheme described below, where
$\varepsilon $ is the stop criteria.
\begin{algorithm}[!h]
\caption{Semi-Implicit scheme applied to (\ref{algo1 riccati})}
\begin{algorithmic} [1]

\STATE $p_N^{0}$ given

\STATE $(m+1)^{\text{th}}$ step : knowing $p_N^{m}\in
\mathbb{P}_{N}$, find $p_N^{m+1}\in \mathbb{P}_{N}$ such that
\begin{equation*}
\int_{I_{\sigma }} p_N^{m+1}\left( \lambda \right)\left( \lambda
p_N^{m}\left( \lambda \right) +2 \right)\eta \left( \lambda \right)
d\lambda = \int_{I_{\sigma }}\lambda \eta \left( \lambda \right)
d\lambda,
\quad \forall \eta \in \mathbb{P}_{N}. 
\end{equation*}

\STATE If $\|p_N^{m+1}-p_N^{m}\|_{L^2\left(
I_{\sigma}\right)}\leq\varepsilon$ then terminate the algorithm else
return to Step 2.
\end{algorithmic}
\end{algorithm}


\subsubsection{Numerical results}

We analyze separately the three sources of discretization error: the
error of approximation of $p$ by a polynomial $p_{N}$, the error in
the quadrature of the Cauchy formula and the spatial discretization
error. We also discuss the convergence of Algorithm 1.

\noindent $\vartriangleright $ \textit{Polynomial approximation:}
The difference between successive iterations $\Vert
p_{N}^{m+1}-p_{N}^{m}\Vert _{L^{2}(I_{\sigma })}$ of Algorithm 1
decays exponentially. For $N=10$ and for the initial solution
$p_{N}^{0}=0$, the exponential decay rate is equal to $-1.80$. Let
us denote by $p_{N}$ the polynomial obtained after convergence of
$p_{N}^{m}$ by Algorithm 1, the convergence error $\Vert
p_{N}^{m}-p_{N}\Vert _{L^{2}(I_{\sigma })}$ is also exponentially
decaying with an exponential decay rate of $-1.82$. In addition, as
it is usual for spectral methods, the relative error
\begin{equation*}
e=\frac{\left\Vert p-p_{N}\right\Vert _{L^{2}\left( I_{\sigma }\right) }}{%
\left\Vert p\right\Vert _{L^{2}\left( I_{\sigma }\right) }}
\end{equation*}
of the polynomial approximation decreases exponentially with $N$.
Here, the exponential decay rate is $-1.61$.

\medskip

\noindent $\vartriangleright $\textit{\ Approximation through the
Cauchy formula combined with a polynomial approximation:} Because of
the absence of poles in $p_{N}$, the choice of the contour of the
Cauchy formula is free of constraints as long as it surrounds
$I_{\sigma }$. We have chosen a circle parameterized by $\xi (\theta
)=Re^{i\theta }$, with $\theta \in \lbrack 0,2\pi ]$. Then, we have
set the polynomial degree sufficiently large so that the error $e$
can be neglected. The numerical integrations have been performed
with a standard trapezoidal quadrature rule. Figure \ref{choixM1}
represents the relative error
\begin{equation*}
E=\frac{\left\Vert p-p_{N,M}\right\Vert _{L^{2}\left( I_{\sigma }\right) }}{%
\left\Vert p\right\Vert _{L^{2}\left( I_{\sigma }\right) }}
\end{equation*}
between $p$ and $p_{N,M}$ for various values of the radius $R$. It
converges exponentially with respect to $M$ towards $e,$ and the
exponential decay rate is a decreasing function of $R$.

\begin{center}
\begin{figure}[h]
\begin{center}
{\normalsize \scalebox{0.4}{\includegraphics*{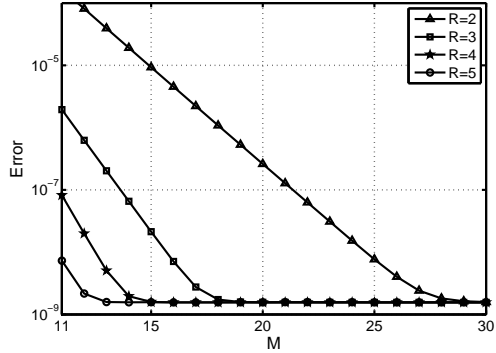}} }
\end{center}
\caption{Error $E$ in logarithmic scale as a function of $M$ for
different values of $R$ and for $N=10$} \label{choixM1}
\end{figure}
\end{center}

\noindent $\vartriangleright $\textit{\ Spatial discretization:}
Computations have been carried out for $\uu_{N,h}$ defined in
(\ref{uNh'}) with $N=1$ and for $\uu_{N,M,h}$ defined in
(\ref{app2ex1}) with $R=5$, $N=10$ and $M=11$ so that $e$ is in the
range of $10^{-9}$ and is negligible compared to $E$. The
approximation (\ref{app2ex1}) is obtained from the formula
(\ref{discspatialeu}) by substituting $\Phi _{U}$ and $\Phi _{Z}$ by
the identity operator and by using the centered finite difference
scheme of the second order derivative, i.e. by replacing $\Lambda $
by its discretization $\Lambda _{h}$. The spatial discretizations
are compared to the expression of the approximations $\uu_{N}(t,x)$
and $\uu_{N,M}(t,x)$ that we calculate thanks to the modal
decomposition of the operator $\partial _{xx}^{2}$ with homogeneous
Dirichlet boundary conditions. It
comes%
\begin{equation*}
\uu_{N}(t,x)=-\sum\limits_{i\in \mathbb{N}^{\ast }}w_{i}e^{-\left(
\lambda _{i}^{-1}+p_{N} \left( \lambda _{i}\right) \right)
t}p_{N}\left( \lambda _{i}\right) \phi _{i}\left( x\right) ,
\end{equation*}%
where $\lambda _{i},$ $\phi _{i}$ and $w_{i}$ represent respectively the $i^{%
\text{th}}$ eigenvalue, the $i^{\text{th}}$ eigenvector and the $i^{\text{th}%
}$ modal coefficient of the initial condition. The same expression
holds for $\uu_{N,M}(t,x)$ after replacement of $p_{N}$ by
$p_{N,M}$. Then, the errors,
\begin{equation*}
\displaystyle\frac{\int_{0}^{T}\left(\sum_{i=0}^{\mathcal{N}}\left| \uu%
_{N,i}-\uu_{N}(x_{i})\right|^{2}\right)^{\frac{1}{2}}dt}{\int_{0}^{T}\left(\sum_{i=0}^{%
\mathcal{N}}\left\vert \uu_{N,i}\right\vert ^{2}\right)^{\frac{1}{2}}dt} \quad \text{and} \quad \frac{%
\int_{0}^{T}\left(\sum_{i=0}^{\mathcal{N}}\left| \uu_{N,M,i}-\uu%
_{N,M}(x_{i})\right| ^{2}\right)^{\frac{1}{2}}dt}{\int_{0}^{T}\left(\sum_{i=0}^{\mathcal{N}%
}\left| \uu_{N,M}(x_{i})\right| ^{2}\right)^{\frac{1}{2}}dt},
\end{equation*}%
are known to be theoretically quadratic with respect to $h$ the
spatial discretization step, which is confirmed by our experiments.

\subsection{Example 2: Heat equation with unbounded control operator}

\label{exemple 2}

In this example, the control operator is internal and unbounded and
the
observation operator is internal and bounded. We apply the theory of Section %
\ref{Unbounded control operators} without going into much detail as
for other examples. We only describe the state equation and the
functional analysis framework.

\subsubsection{The state equation}

We keep the heat equation as the state equation with the same control space $%
U \subset L^{2}(\Omega )$ and the same functional $\mathcal{J}$ but
the control operator is replaced by an unbounded one defined in the
distribution sense by $\left\langle Bu,v\right\rangle =-\beta
\int_{\Omega }u\beta _{1}.\nabla vdx$, where $\beta _{1}$ is a
vector of $\mathbb{R}^{d}.$

\subsubsection{The functional framework}

First, we pose $V=H_{0}^{1}(\Omega )$, so $A=\Delta $ is an
isomorphism from $V$ into $V^{\prime }$ from which we define
$J=(-A)^{-1}$. It allows to give a precise definition of $B$: for
all $v\in V$, $\left\langle Bu,v\right\rangle _{V^{\prime
},V}=-\int_{\Omega }u\beta _{1}.\nabla v~dx$ for $u\in U$. Let us
compute $B^{\ast }$ defined by $\left( Bu,v\right) _{V^{\prime
}}=\left( u,B^{\ast }v\right) _{L^{2}(\Omega )}$ for $u\in U$ and
$v\in V^{\prime }$. Since $\left( Bu,v\right) _{V^{\prime
}}=\left\langle Bu,Jv\right\rangle _{V^{\prime },V}=-\left( u,\beta
_{1}.\nabla Jv\right) _{L^{2}(\Omega )}$ then $B^{\ast }v=-\beta
_{1}\nabla Jv$. We introduce the kernel of $B$, $K_{B}=\{u\in
L^{2}\left( \Omega \right) |u$ constant in the direction$~\beta
_{1}\},$ $U=L^{2}(\Omega )/K_{B} $ and the kernel of $B^{\ast }$,
$K_{B^{\ast }}=\{v\in V^{\prime }~|~Jv~$constant in the
direction$~\beta _{1}\}$. Since $Jv=0$ on the boundary $\partial
\Omega $ then $K_{B^{\ast }}=\{0\}$. Then by using classical
arguments, e.g. \cite{GirRav}, one deduces that $B$ is an
isomorphism from $U $ into $V^{\prime }.$ We pose also
$Y=Z=L^{2}(\Omega ),$ $S=C=I\in \mathcal{L}(Z,Y).$ Now, we introduce
$X=V^{\prime },$ and $\Lambda =J$ which is a nonnegative operator.
The fact that $\Lambda $ is self-adjoint, i.e. that $(\Lambda
v,v^{\prime })_{V^{\prime }}=(v,\Lambda v^{\prime })_{V^{\prime }}$,
comes from the equality $\left\langle \Lambda v,Jv^{\prime
}\right\rangle _{V^{\prime },V}=\left\langle Jv,\Lambda v^{\prime
}\right\rangle _{V,V^{\prime }}.$ To complete the construction, we
pose $\Phi _{U}=(\beta _{1}.\nabla )^{-1},\Phi _{V}=\Lambda ,\Phi
_{V^{\prime }}=I,\Phi _{Y}=\Phi _{Z}=\Lambda ^{1/2}$ which is an
isomorphism from $V^{\prime }$ into $L^{2}(\Omega )$, $a(\Lambda
)=I,\text{ }b(\Lambda )=I\text{ and }c(\Lambda )=I$. Finally, we
proceed as in the first example for the computation of $p_{N,M}$.

\subsection{Example 3: Beam or plate model}

\label{exemple 3}

Here, we deal with a second order problem in time with distributed
internal bounded observation and control.

\subsubsection{The state equation and a choice of semi-decentralized
controllers}

The model under consideration is a fourth order equation posed in a domain $%
\Omega \subset \mathbb{R}^{d}$ which may correspond to a
Euler-Bernoulli clamped beam equation when $d=1$ or to a
Love-Kirchhoff clamped plate equation when $d=2$. The control is
still distributed over the whole domain
and the state is $z:=%
\begin{bmatrix}
w & \partial _{t}w%
\end{bmatrix}%
^{T}$ where $w$ is solution to the boundary value problem
\begin{eqnarray}
\partial _{tt}^{2}w &=&-\Delta ^{2}w+\beta u\text{ in }\mathbb{R}^{+\ast
}\times \Omega ,  \label{eq:etat3} \\
w &=&\nabla w.\nu=0\text{ on }\mathbb{R}^{+\ast }\times \partial
\Omega ,
\label{cL1} \\
w &=&w_{0}\text{ and }\partial _{t}w=w_{1}\text{ in }\Omega \text{
at }t=0, \label{CI1}
\end{eqnarray}%
for a given function $\beta $ and given initial conditions $w_{0}$ and $%
w_{1} $ all defined in $\Omega .$ Choosing the cost functional $\mathcal{J}%
\left( w_{0},w_{1};u\right) =\int_{0}^{+\infty }\left\Vert \Delta
w\right\Vert _{L^{2}\left( \Omega \right) }^{2}+\left\Vert \gamma
u\right\Vert _{L^{2}\left( \Omega \right) }^{2}dt$, we pose $\Lambda
=\left( \Delta ^{2}\right) ^{-1},$ defined as the inverse of the
biharmonic operator
$\Delta ^{2}:H_{0}^{2}(\Omega )\cap H^{4}(\Omega )\rightarrow L^{2}(\Omega )$%
. The method can handle the general case, however in the special case $%
\gamma =\beta =1$, we show in the following Section that the optimal
control
$\uu$ may be approached by 
\begin{equation}
\uu_{N,M,h}=-\frac{1}{2\pi }\sum_{\ell =1}^{M}\omega _{\ell }(\vv%
_{1,h}^{\ell }+\overline{\vv}_{1,h}^{\ell }),  \label{app2ex3}
\end{equation}%
where $(\vv_{1,h}^{\ell },\overline{\vv}_{1,h}^{\ell })$ are
solution to
\begin{equation}
\begin{aligned} \begin{bmatrix} \xi _{1}-\Lambda_{h} & -\xi _{2} \\ \xi _{2}
& \xi _{1}-\Lambda_{h} \end{bmatrix} \begin{bmatrix} \vv_{1,h}^{\ell} \\
\vv_{2,h}^{\ell} \end{bmatrix} &= \begin{bmatrix} \Re e\left( -i\xi
^{\prime }k_{1,N}\left( \xi \right) \right) w_{h} \\ \Im m\left(
-i\xi ^{\prime }k_{1,N}\left( \xi \right) \right) w_{h}
\end{bmatrix}, \\ \begin{bmatrix} \xi _{1}-\Lambda_{h} & -\xi _{2}
\\ \xi _{2} & \xi _{1}-\Lambda_{h}
\end{bmatrix} \begin{bmatrix} \overline{\vv}_{1,h}^{\ell} \\
\overline{\vv}_{2,h}^{\ell} \end{bmatrix} &= \begin{bmatrix} \Re
e\left( -i\xi ^{\prime }k_{2,N}\left( \xi \right) \right)
\partial_tw_{h} \\ \Im m\left( -i\xi ^{\prime }k_{2,N}\left( \xi
\right) \right) \partial_tw_{h}
\end{bmatrix}, \end{aligned}
\end{equation}%
for each quadrature point $\xi _{\ell }:=\xi _{1,\ell }+i\xi
_{2,\ell }$, and
\begin{equation}
\Lambda _{h}^{-1}=\dfrac{1}{h^{4}}%
\begin{bmatrix}
2h^{3} & -\frac{1}{2}h^{3} &        &        &        &                   &      \\
-4     & 6                 & -4     & 1      &        &                   &      \\
1      & \ddots            & 6      & \ddots & \ddots &                   &      \\
       & \ddots            & \ddots & \ddots & \ddots & \ddots            &      \\
       &                   & \ddots & \ddots & 6      & \ddots            & 1    \\
       &                   &        & 1      & -4     & 6                 & -4   \\
       &                   &        &        &        & -\frac{1}{2}h^{3} & 2h^{3}%
\end{bmatrix}%
,  \label{Delta^2h}
\end{equation}%
the vectors $\vv_{i,h}^{T}$, $\overline{\vv}_{i,h}^{T}$,
$w_{h}^{T}$, $\partial _{t}w_{h}^{T} $ being the approximations of $
\begin{pmatrix}
\vv_{i}(x_{1}) & \ldots & \vv_{i}(x_{\mathcal{N}-1})%
\end{pmatrix}
$, $
\begin{pmatrix}
\overline{\vv}_{i}(x_{1}) & \ldots & \overline{\vv}_{i}(x_{\mathcal{N}-1})%
\end{pmatrix}%
$, $
\begin{pmatrix}
w(x_{1}) & \ldots & w(x_{\mathcal{N}-1})%
\end{pmatrix}%
$, $
\begin{pmatrix}
\partial _{t}w(x_{1}) & \ldots & \partial _{t}w(x_{\mathcal{N}-1})%
\end{pmatrix}%
$ and $k_{i,N}$ being defined in the following section.

\subsubsection{Construction and study of the semi-decentralized controllers}

Firstly, the plate equation must be formulated under the form of a
first
order system. We set $z^{T}=%
\begin{bmatrix}
w & \partial _{t}w%
\end{bmatrix}%
$, so we find that $A=%
\begin{bmatrix}
0 & I \\
-\Delta ^{2} & 0%
\end{bmatrix}%
$, the operators $B^{T}=%
\begin{bmatrix}
0 & I%
\end{bmatrix}%
$, $C=%
\begin{bmatrix}
\Delta & 0%
\end{bmatrix}%
,\text{ }S=I$ and the functional spaces $U=L^{2}\left( \Omega
\right) $, $Y\subset L^{2}\left( \Omega \right) $. The usual state
space is $ Z=H_{0}^{2}\left( \Omega \right) \times L^{2}\left(
\Omega \right) $ thus $B$ and $C$ are bounded. We pose
$X=L^{2}\left( \Omega \right) $, $\Lambda =\left( \Delta ^{2}\right)
^{-1}$ an isomorphism from $X$ into $H^{4}\left(
\Omega \right) \cap H_{0}^{2}\left( \Omega \right) $, $\Phi _{Z}=%
\begin{bmatrix}
\Lambda ^{\frac{1}{2}} & 0 \\
0 & I%
\end{bmatrix}%
$, $\Phi _{U}=I$ and $\Phi _{Y}=\Delta \Lambda ^{1/2}$, so $Y=\Delta
\Lambda ^{\frac{1}{2}}L^{2}\left( \Omega \right) =\Delta
H_{0}^{2}\left( \Omega \right) $ and $a\left( \lambda \right) =
\begin{bmatrix}
0 & \lambda ^{-1/2} \\
-\lambda ^{-1/2} & 0%
\end{bmatrix}
$, $b^{T}\left( \lambda \right) =
\begin{bmatrix}
0 & 1%
\end{bmatrix}%
$, $c\left( \lambda \right) =%
\begin{bmatrix}
1 & 0%
\end{bmatrix}%
$ and $s\left( \lambda \right) =1.$

\begin{remark}
\label{isoexp3} $\text{ }$

\begin{enumerate}
\item We indicate how isomorphisms $\Phi _{Y}$ and $\Phi _{Z}$ have been
chosen. The choice of $\Phi _{Z}$ directly comes from the expression
of the inner product $\left( z,z^{\prime }\right) _{Z}=\left( \Phi
_{Z}^{-1}z,\Phi _{Z}^{-1}z^{\prime }\right) _{X^{2}}$ and from
$\left( z_{1},z_{1}^{\prime }\right) _{H_{0}^{2}\left( \Omega
\right) }=\left( \left( \Delta ^{2}\right)
^{\frac{1}{2}}z_{1},\left( \Delta ^{2}\right)
^{\frac{1}{2}}z_{1}^{\prime
}\right) _{L^{2}\left( \Omega \right) }$. For $\Phi _{Y}$, we start from $%
C=\Phi _{Y}c(\Lambda )\Phi _{Z}^{-1}$ and from the relation $\left(
y,y^{\prime }\right) _{Y}=\left( \Phi _{Y}^{-1}y,\Phi
_{Y}^{-1}y^{\prime
}\right) _{X}$ which imply that $\Delta =\Phi _{Y}c_{1}\Lambda^{-\frac{1}{2}%
} $. The expression of $\Phi _{Y}$ follows.

\item The isomorphisms $\Phi _{Z}$ and $\Phi _{U}$ are some matrices of
functions of $\Lambda $, and so $Q$ is also. Thus, the approximation
is
directly developed on $Q=k(\Lambda )$. 
\end{enumerate}
\end{remark}

The controller $Q$ is a $1\times 2$ matrix of operators $k=%
\begin{bmatrix}
k_{1} & k_{2}%
\end{bmatrix}%
$, with $k_{1}=p_{21}\Lambda ^{-\frac{1}{2}}$ and $k_{2}=p_{22}$. So
$\left( k_{i}\right) _{i=1,2}$ is solution to the system
\begin{equation}
\lambda k_{1}^{2}+2k_{1}-1=0 \quad \text{and} \quad
2k_{1}-k_{2}^{2}=0. \label{ex3.numricc}
\end{equation}%
As in the first example, a nonnegative exact solution
\begin{equation*}
k_{1}(\lambda )=\frac{-1+\sqrt{1+\lambda }}{\lambda } \quad
\text{and} \quad k_{2}(\lambda )=\sqrt{2\frac{-1+\sqrt{1+\lambda
}}{\lambda }}
\end{equation*}%
can be exhibited, so it is used to discuss numerical validation.
Again, the functions $k_{i}(\lambda )$ are sufficiently regular to
be accurately approximated by polynomials which computation is done
using the spectral method with Legendre polynomials and the
\textbf{LGL} quadrature formulae.
The weak formulation equivalent to (\ref{ex3.numricc}) is%
\begin{equation}
\int_{I_{\sigma }}\left( \lambda k_{1}^{2}+2k_{1}-1\right) \eta
_{1}(\lambda )\text{ }d\lambda =0 \quad \text{and} \quad
\int_{I_{\sigma }}\left( 2k_{1}-k_{2}^{2}\right) \eta _{2}(\lambda
)\text{ }d\lambda =0 \label{WeakFormEx3}
\end{equation}%
for all $\eta _{1},$ $\eta _{2}\in \mathcal{C}^{0}\left( I_{\sigma
}\right) , $ it is solved by the semi-implicit Algorithm 2 below.

\begin{algorithm}[!h]
\caption{Semi-Implicit scheme for (\ref{WeakFormEx3})}
\begin{algorithmic} [1]

\STATE $ k_{1,N_1}^{0}$, $k_{2,N_2}^{0}$ are given.

\STATE $(m+1)^{\text{th}}$ step: Knowing
$(k_{1,N_{1}}^{m},k_{2,N_{2}}^{m})\in
\mathbb{P}_{N_{1}}\times \mathbb{P}_{N_{2}}$, find $%
(k_{1,N_{1}}^{m+1},k_{2,N_{2}}^{m+1})\in \mathbb{P}_{N_{1}}\times \mathbb{P}%
_{N_{2}}$ such that $\forall \left( \eta _{1},\eta _{2}\right) \in \mathbb{P}%
_{N_{1}}\times \mathbb{P}_{N_{2}}$,
\begin{gather*}
\int_{I_{\sigma}}k_{1,N_1}^{m+1}\left( \lambda \right) \left(
\lambda k_{1,N_1}^{m}\left( \lambda \right) +2\right) \mathit{\eta
}_{1}\left( \lambda \right) d\lambda
=\int_{I_{\sigma}}\eta_{1}\left( \lambda
\right) d\lambda , \\
\int_{I_{\sigma}}k_{2,N_2}^{m+1}\left( \lambda \right) \left(
k_{2,N_2}^{m}\left( \lambda \right) +1\right) \eta_{2}\left( \lambda
\right) d\lambda =\int_{I_{\sigma}}\left( 2k_{1,N_1}^{m+1}\left(
\lambda \right) +k_{2,N_2}^{m}\left( \lambda \right) \right)
\eta_{2}\left( \lambda \right) d\lambda.
\end{gather*}

\STATE If $\|k_{i,N_i}^{m+1}-k_{i,N_i}^{m}\|_{L^2\left(
I_{\sigma}\right)}\leq\varepsilon_i$ then terminate the algorithm
else return to Step 2

\end{algorithmic}
\end{algorithm}

\subsubsection{Numerical results}

The simulations are conducted for a Euler-Bernoulli beam model with length $%
L=4.73m$ so that all eigenvalues of $\Lambda $ are included in
$I_{\sigma }=\left( 0,1\right) $.

\noindent $\vartriangleright $ \textit{Polynomial approximation:}
Numerical
tests show an exponential convergence of the Algorithm 2. For $%
N_{1}=N_{2}=10 $ and for null initial conditions$,$ the exponential
decay rate is about $-1.80$ and this of the differences of
successive iterates $(\left\|
k_{i,N_{i}}^{m+1}-k_{i,N_{i}}^{m}\right\| _{L^{2}\left( I_{\sigma
}\right) })_{i=1,2}$ is about $-1.83$. The two polynomial
approximation errors
\begin{equation*}
e_{i}=\frac{\Vert k_{i,N_{i}}-k_{i}\Vert _{L^{2}(I_{\sigma
})}}{\Vert k_{i}\Vert _{L^{2}(I_{\sigma })}}
\end{equation*}
are in the order of $10^{-10}$ and $10^{-11}$.

\noindent $\vartriangleright $ \textit{Approximation through the
Cauchy formula combined with a polynomial approximation:} The
numerical integrations, carried out as in the first example, yield
relative errors
\begin{equation*}
E_{i}=\frac{\Vert k_{i,N_{i},M}-k_{i}\Vert _{L^{2}(I_{\sigma
})}}{\Vert k_{i}\Vert _{L^{2}(I_{\sigma })}}
\end{equation*}%
parameterized by the number $M$ of integration nodes. They decrease
exponentially with respect to $M$ as shown on Figure
\ref{choixMRex3} where both $M$ are varying from $11$ to $30$.
Several values of the radius $R$ have been tested showing that the
convergence rate is increasing with $R$.

\begin{figure}[!h]
    \centering
    \subfigure[]{\label{microcantilever1} \includegraphics[width=7cm]{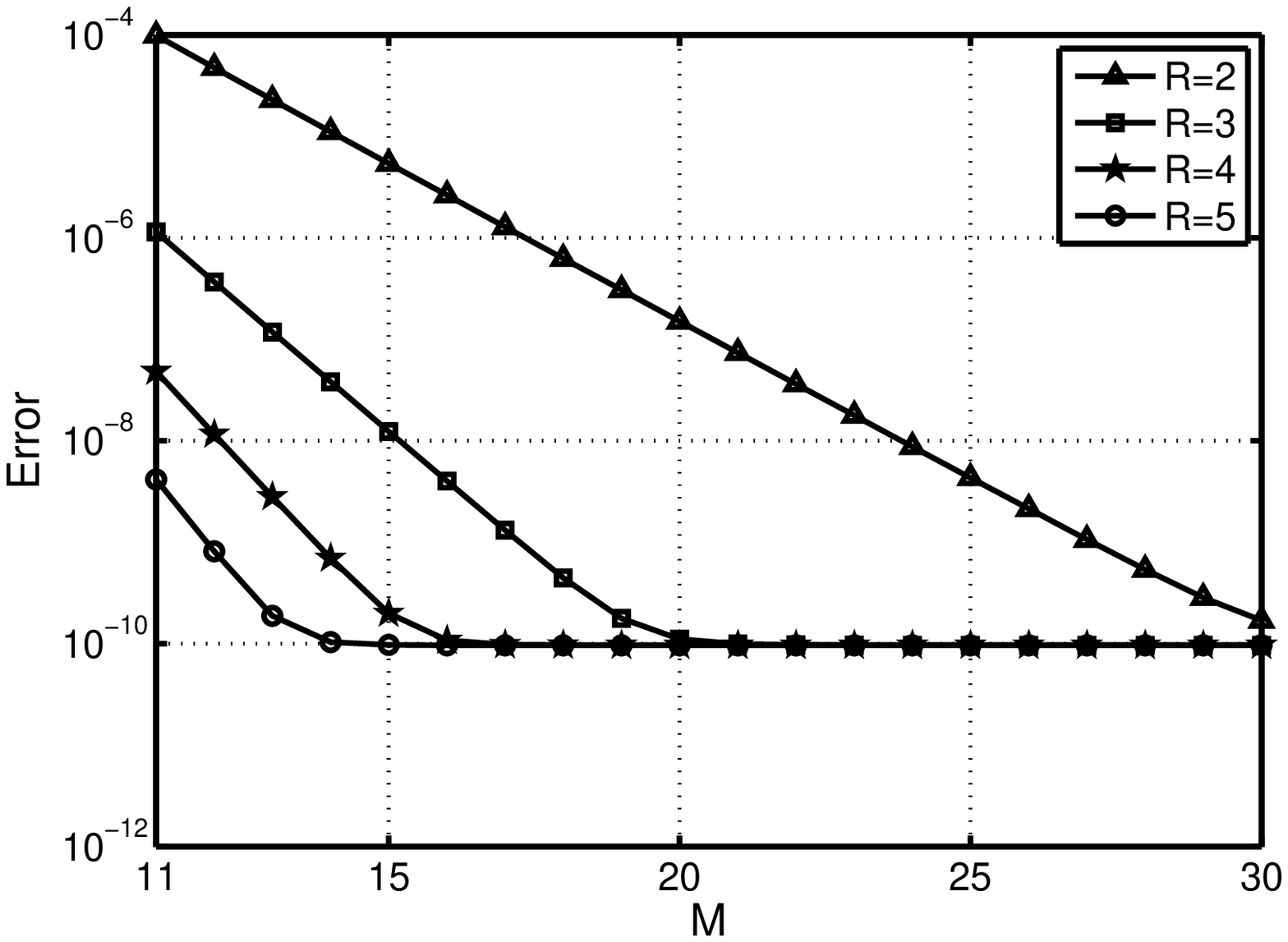}}
    \subfigure[]{\label{microcantilever2} \includegraphics[width=7cm]{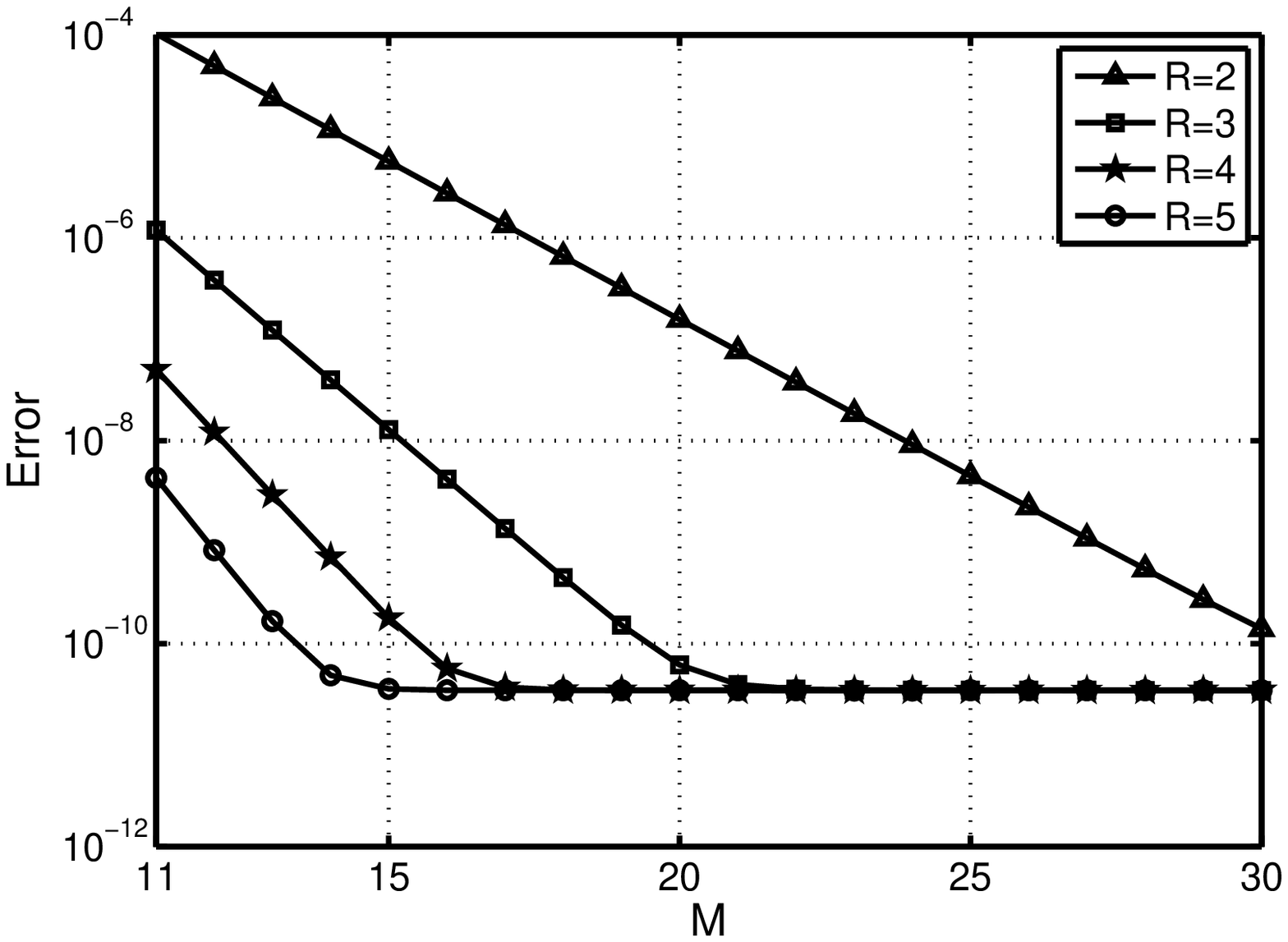}}
\caption{ Relative errors \subref{microcantilever1} $E_{1}$ and
\subref{microcantilever2} $E_{2}$ in logarithmic scale with respect
to $M_{i}$ for different values of $R$ and for $N_{1}=N_{2}=10$}
    \label{choixMRex3}
\end{figure}

\noindent $\vartriangleright $ \textit{Spatial discretization:}
Taking the same notation as in Example 1, the finite difference
discretization of the
one-dimensional fourth order boundary value problem%
\begin{equation}
\Delta ^{2}\vv=f~\text{in }\Omega ,\quad \vv=\nabla \vv.\nu=0~\text{on }%
\partial \Omega ,  \label{BilaplacienDirichlet}
\end{equation}%
is
\begin{equation*}
\frac{1}{h^{4}}\left( \vv_{i-2}-4\vv_{i-1}+6\vv_{i}-4\vv_{i+1}+\vv%
_{i+2}\right) =f({x_{i}}),\quad \text{for }i=2,...,\mathcal{N}-2
\end{equation*}%
for the equation in $\Omega ,$ and $\vv_{0}=0$,
$\vv_{\mathcal{N}}=0$ for the boundary conditions on $\vv$. This
scheme is consistant at the order 2. To do not deteriorate the error
we use a second order scheme for the
boundary conditions on $\partial _{x}$. From Taylor's Theorem, $\vv(x_{1})=%
\vv(0)+h\partial _{x}\vv(0)+\frac{h^{2}}{2}\partial _{xx}^{2}\vv(0)+\mathcal{%
O}\left( h^{3}\right) $ and $\vv(x_{2})=\vv(0)+2h\partial _{x}\vv%
(0)+2h^{2}\partial _{xx}^{2}\vv(0)+\mathcal{O}\left( h^{3}\right) $.
By
eliminating the term in $\partial _{xx}^{2}\vv(0)$ it comes $\partial _{x}\vv%
(0)=\frac{-3\vv(0)+4\vv(x_{1})-\vv(x_{2})}{2h}+\mathcal{O}\left(
h^{2}\right) \text{.}$ The same is done for $\partial _{x}\vv(L)$, we find $%
\partial _{x}\vv(L)=\frac{-3\vv(x_{\mathcal{N}})+4\vv(x_{\mathcal{N}-1})-\vv%
(x_{\mathcal{N}-2})}{2h}+\mathcal{O}\left( h^{2}\right) \text{.}$ In
total, the discretization of the problem
(\ref{BilaplacienDirichlet}) after
elimination of $\vv_{0}$ and $\vv_{\mathcal{N}}$ is written in matrix form $%
[\Delta _{h}^{2}]\vv_{h}=f_{h},$ where $\vv_{h}^{T}=%
\begin{bmatrix}
\vv_{1} & \ldots & \vv_{\mathcal{N}-1}%
\end{bmatrix}%
$, $f_{h}^{T}=%
\begin{bmatrix}
f(x_{1}) & \ldots & f(x_{\mathcal{N}-1})%
\end{bmatrix}%
$ and $[\Delta _{h}^{2}]$ is the matrix in (\ref{Delta^2h}). The
full optimal control approximation (\ref{app2ex3}) is obtained by
using the formulae (\ref{Pbdiscspatiale}-\ref{discspatiale}) and the
formula of $\uu_{N,M,h}$ in (\ref{discspatialeu1}) with $\Lambda
_{h}=[\Delta _{h}^{2}]^{-1}$. To validate this full strategy, we
have carried a computation with $R=5$, $M=11$, $10^{2}$ points in
the mesh of $\Omega $ and for the time $t\in (0,T)$ with $T=15 s$.

The spatial discretization is compared to the expression of the
approximation $\uu_{N,M}(t,x)$ that we calculate thanks to the modal
decomposition of the operator $\partial _{xxxx}^{4}$ with
homogeneous Dirichlet boundary conditions. Its expression is too big
to be presented, it
has been detailed in \cite{Yak}. Denoting by $\uu_{N,M,i}=\left( \uu%
_{N,M,h}\right) _{i}$ the discrete values of the control, the
spatial discretization relative error
\begin{equation*}
\frac{\int_{0}^{T}\left(\sum_{i=0}^{\mathcal{N}}\left| \uu_{N,M,i}(t)-\uu%
_{N,M}(x_{i},t)\right| ^{2}\right)^{\frac{1}{2}}dt}{\int_{0}^{T}\left(\sum_{i=0}^{\mathcal{N}%
}\left\vert \uu_{N,M}(x_{i},t)\right\vert
^{2}\right)^{\frac{1}{2}}dt}
\end{equation*}%
between $u_{N,M}$ and $u_{N,M,h}$ is equal to $1.10 \times 10^{-4}$.

\subsection{Example 4: Two-dimensional heat equation with a boundary control}

\label{exemple 4}This example deals with a special case of boundary
control.

\subsubsection{The state equation and a choice of semi-decentralized
controller}

Let $\Omega $ be the rectangle $\left( 0,1\right) \times \left(
0,\pi \right) \subset \mathbb{R}^{2}$ and $\Gamma _{0}=\left\{
\left( 0,y\right) :0<y<\pi \right\} $ a part of its boundary. Let us
consider the heat equation with a control $v(t,y)$ applied to the
boundary $\Gamma _{0}$,
\begin{equation*}
\left\{ \begin{aligned} \partial _{t}w\left( t,x,y\right)
-\partial_{xx}^2 w\left( t,x,y\right) -\partial _{yy}^2 w\left(
t,x,y\right) &=0 && \text{in }\mathbb{R}^{+\ast }\times \Omega ,
\\ w\left( t,0,y\right) &=v\left(
t,y\right) && \text{on }\mathbb{R}^{+\ast }\times \Gamma _{0}, \\
w\left( t,x,y\right) &=0 && \text{on }\mathbb{R}^{+\ast }\times
\partial \Omega \backslash \Gamma _{0}, \\ w\left(
0,x,y\right)&=w_{0}\left( x,y\right) && \text{in }\Omega \text{.}
\end{aligned}\right.
\end{equation*}%
Since our method is not directly applicable, we reduce the problem
to an internal control problem. We introduce $\overline{w}\left(
t,x,y\right) =w\left( t,x,y\right) -\left( 1-x\right) v\left(
t,y\right) $ solution to the heat equation with homogeneous boundary
conditions,
\begin{equation*}
\left\{ \begin{aligned} \partial _{t}\overline{w}\left( t,x,y\right)
& =\partial _{xx}^2 \overline{w}\left( t,x,y\right) +\partial
_{yy}^2 \overline{w}\left( t,x,y\right) -\left( 1-x\right) u\left(
t,y\right) && \text{in }\mathbb{R}^{+\ast }\times \Omega, \\
\overline{w}\left( t,x,y\right) & = 0 && \text{on }\mathbb{R}^{+\ast
}\times \partial \Omega,
\\ \overline{w}\left( 0,x,y\right) & = \overline{w}_{0}\left( x,y\right)
=w_{0}\left( x,y\right) -\left( 1-x\right) w_{0}\left( 0,y\right) &&
\text{in }\Omega , \end{aligned}\right.
\end{equation*}%
with $u\left( t,y\right) =\partial _{t}v\left( t,y\right) -\partial
_{yy}^{2}v\left( t,y\right) $ that allows for easy computation of $v$ once $%
u $ is known. For simplicity, we define the cost function and the
control space with $u$ instead of $v$. So, we chose the control
space $U\subset
L^{2}\left( \Gamma _{0}\right) $ and the cost functional%
\begin{equation}
\mathcal{J}\left( \overline{w}_{0};u\right) =\int_{0}^{+\infty
}\left\Vert \overline{w}\left( t,x,y\right) \right\Vert
_{L^{2}\left( \Omega \right) }^{2}+\left\Vert u\left( t,y\right)
\right\Vert _{L^{2}\left( \Gamma _{0}\right) }^{2}\text{ }dt\text{.}
\label{fonctionnelle4}
\end{equation}%
Then, the approximation of the control is done by using $J$ terms in
a modal decomposition of $\partial _{xx}^{2}$. Without entering into
much details, that are given in sub-Section \ref{ex4 Appli}, the
state vector is comprised
with $J$ components $\overline{w}_{j}\left( t,y\right) =\sqrt{2}\int_{0}^{1}%
\overline{w}\left( t,x,y\right) \sin \left( j\pi x\right) $ $dx$ and
the associated control is therefore $\uu=-k(\Lambda )z$ where $k$ is
a $J$-row vector of functions and $\Lambda $ is the isomorphism
$(-\partial _{yy}^{2})^{-1}:L^{2}(\Gamma _{0})\rightarrow
H^{2}(\Gamma _{0})\cap H_{0}^{1}(\Gamma _{0})$. A semi-decentralized
control is built from a rational approximation $k_{j,N_{j}}$ of each
component $k_{j}$ and from a
quadrature rule in the Cauchy formula,%
\begin{equation}
\uu_{N,M,h}=-\frac{1}{2\pi }\sum_{\ell =1}^{M}\omega _{\ell }\sum_{j=1}^{J}%
\vv_{1,h}^{\ell ,j},  \label{app4ex4}
\end{equation}%
where each $\vv_{1,h}^{\ell ,j}$ is solution to a system like
(\ref{vh1d})
with $\overline{w}_{j,h}$ instead of $w_{h}$ and $k_{j,N_{j}}$ instead of $%
p_{N}.$

\subsubsection{Construction and study of the semi-decentralized controller}

\label{ex4 Appli}

We start with projecting the model on the $J$ first components of
the orthonormal basis $\psi _{j}\left( x\right) =\sqrt{2}\sin \left(
j\pi x\right) $ in $L^{2}(0,1)$. Since $\int_{0}^{1}\left(
1-x\right) \psi _{j}(x)~dx=\frac{\sqrt{2}}{j\pi }$ the components
$\overline{w}_{j}\left( t,y\right) $ are solution to the equations
posed on $\Gamma _{0}$,
\begin{equation}
\left\{ \begin{aligned} \partial _{t}\overline{w}_{j}\left(
t,y\right) & = -j^{2}\pi ^{2}\overline{w}_{j}\left( t,y\right)
+\partial _{yy}^{2}\overline{w}_{j}\left( t,y\right)
-\frac{\sqrt{2}}{j\pi }u\left(
t,y\right) && \text{in }\mathbb{R}^{+\ast }\times \Gamma_0 , \\
\overline{w}_{j}\left( t,0\right) &=\overline{w}_{j}\left(
t,\pi\right)=0 && \text{in }\mathbb{R}^{+\ast } , \\
\overline{w}_{j}\left( 0,y\right) &=\overline{w}_{j,0}\left(
y\right) =\int_{0}^{1}\overline{w}_{0}\left( x,y\right) \psi
_{j}\left( x\right) dx && \text{in }\Gamma_0 \text{.}
\end{aligned}\right.  \label{Etat4}
\end{equation}%
This is the system of state equations coupled by a common internal control $%
u\left( t,y\right) $. The cost functional (\ref{fonctionnelle4}) is
reduced to
\begin{equation*}
\mathcal{J}\left( \overline{w}_{0};u\right) \simeq \mathcal{J}\left(
\overline{w}_{.,0};u\right) =\int_{0}^{+\infty }\sum_{j=1}^{J}||\overline{w}%
_{j}\left( t,y\right) ||_{L^{2}\left( \Gamma _{0}\right)
}^{2}+||u\left( t,y\right) ||_{L^{2}\left( \Gamma _{0}\right)
}^{2}~dt.
\end{equation*}%
Then, the state variable is $z^{T}=%
\begin{bmatrix}
\overline{w}_{1} & \ldots & \overline{w}_{J}%
\end{bmatrix}%
$, $A=-\diag[(j^{2}\pi ^{2}+\Lambda ^{-1})_{j=1\ldots J}]$, $B^{T}=\frac{\sqrt{2}%
}{\pi }%
\begin{bmatrix}
\frac{I}{1} & \ldots & \frac{I}{J}%
\end{bmatrix}%
$, and $C$ is the identity operator. The control and the observation
spaces are $U=L^{2}\left( \Gamma _{0}\right) $ and $Y=\left(
L^{2}\left( \Gamma _{0}\right) \right) ^{J}$. In addition we pose
$X=L^{2}\left( \Gamma _{0}\right) $ and the state space $Z=\left(
L^{2}\left( \Gamma _{0}\right) \right) ^{J}$ thus $B$ and $C$ are
bounded. Thus $\Phi _{Z}=\Phi _{Y}=I_{J\times J}$, $\Phi _{U}=I$ and
$a\left( \lambda \right) =-\diag\left[ (j^{2}\pi
^{2}+\frac{1}{\lambda })_{j=1\ldots J}\right] $, $b^{T}\left(
\lambda
\right) =\frac{\sqrt{2}}{\pi }%
\begin{bmatrix}
\frac{1}{1} & \ldots & \frac{1}{J}%
\end{bmatrix}
$ and $c\left( \text{.}\right) $ is the identity operator on $\mathbb{R}%
^{J}$. Since $\Phi _{Z}$ and $\Phi _{U}$ are the identity operators,
the approximation is developed on $Q=k(\Lambda )$ with $k(\Lambda )=q(\Lambda )$%
, and the exact optimal control is $\uu=-k(\Lambda )z$.

To build a rational interpolation $k_{N}\left( \lambda \right) $ of
the form (\ref{apprxfractional}) the interval $I_{\sigma }=(0,1]$ is
meshed with $L+1$ distinct nodes $\lambda _{0},\ldots ,\lambda _{L}$
and each $p\left( \lambda _{n}\right) $ solutions to the
\textbf{ARE} is accurately computed with a standard solver. The
exact expression of $k\left( \lambda _{n}\right) =b^{T}p\left(
\lambda _{n}\right) $ follows and the coefficients of the
rational approximation are solution to the $L+1$ equations $%
k_{j,N_{j}}\left( \lambda _{n}\right) =k_{j}\left( \lambda
_{n}\right) $ i.e. to
\begin{equation*}
\sum\limits_{m=0}^{N_{j}^{N}}d_{m}\lambda _{n}^{m}-k_{j}\left(
\lambda _{n}\right) \sum_{m^{\prime }=0}^{N_{j}^{D}}d_{m^{\prime
}}^{\prime }\lambda _{n}^{m^{\prime }}=0\text{ for }n=0,\ldots ,L.
\end{equation*}%
The number $L$ of equations is taken sufficiently large so that the
system with $N^{N}+N^{D}+2$ unknowns is over-determined and is
solved in the mean square sense by using the singular value
decomposition.

\subsubsection{Numerical results}

The simulation have been conducted with four modes i.e. for $J=4$.
The shape
of the four first functions $k_{j}(\lambda )$ are represented in Figure \ref%
{interpo-fraction4} which shows that they exhibit a singular
behavior at the origin. Thus, they can not be accurately
approximated by polynomials but may be by rational functions.

\begin{figure}[h]
\begin{center}
{\normalsize
\scalebox{0.7}{\includegraphics*{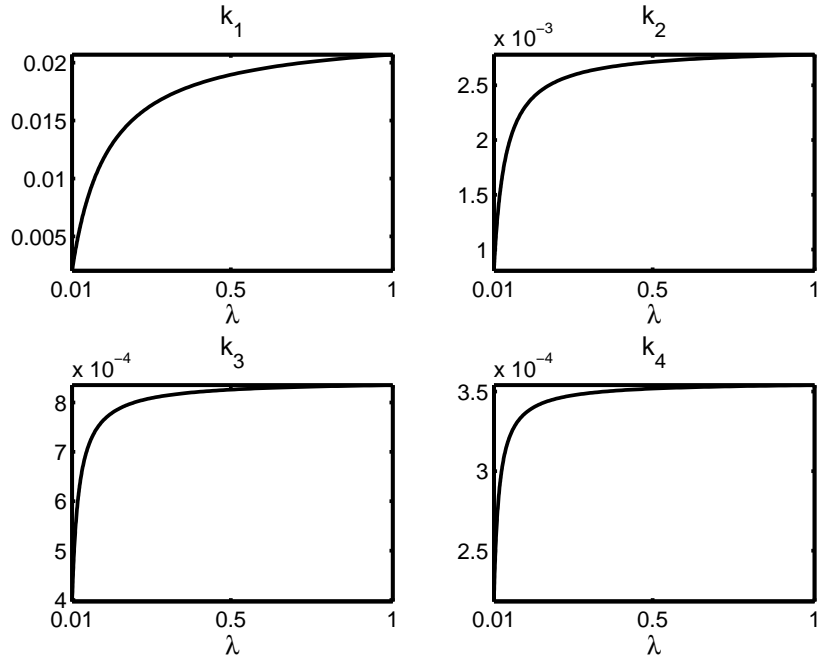}}
}
\end{center}
\caption{Shapes of the Spectral Functions $k$}
\label{interpo-fraction4}
\end{figure}

\medskip

\noindent $\vartriangleright $ \textit{Rational approximation:} In
order to get an accurate approximation, we choose a logarithmic
distribution of $100$ nodes in $(10^{-2},1)$, which corresponds to a
truncation of high frequencies. In Table \ref{ErPNMex4}, we report
the relative errors in the discrete $\ell ^{2}$-norm on the other
set $\{\lambda_{n}\}_{n=0 \ldots 200}$
\begin{equation*}
e_{j}=\frac{\left(\sum_{n=0}^{200} \left| k_{j,N_{j}}\left(
\lambda_{n} \right)-k_{j}\left( \lambda_{n}
\right)\right|^2\right)^{\frac{1}{2}}}{\left(\sum_{n=0}^{200}\left|
k_{j}\left( \lambda_{n} \right)\right|^2\right)^{\frac{1}{2}}},
\text{ with } \lambda_{n} = 10^{-2 + \frac{n}{100}} \quad \text{for
} j=1,\ldots,4 ,
\end{equation*}
between the exact function $k_{j}$ and its rational approximation
$k_{j,N_{j}}$ for special values of numerator's and denominator's
polynomial degrees $N_{j}=\left( N_{j}^{N},N_{j}^{D}\right) $

\begin{table}[h!]
\begin{center}
\caption{Errors of the rational approximations with numerator's and
denominator's degrees $N_{j}=\left( N_{j}^{N},N_{j}^{D}\right) $}
\begin{tabular}{| c | c | c | c | c |}
\hline
$j$                    & $1 $                   & $2 $                   & $3 $                   & $4 $ \\
\hline
$N_{j}$                & $\left( 19,3\right) $ & $\left( 18,3\right) $ & $ \left(17,1\right) $ & $\left( 20,2\right) $ \\
\hline
$e_{j}\times 10^{-10}$ & $0.003$                 & $0.013$                 & $1.003$                 & $0.182$ \\
\hline
\end{tabular}%
\label{ErPNMex4}
\end{center}
\end{table}

\noindent $\vartriangleright $ \textit{The Cauchy formula combined
with rational approximations:} Then, according to Remark \ref{Rq
ellipse}, numerical integrations are performed with a standard
trapezoidal quadrature rule along the ellipse defined by the two
radii in the real and imaginary directions $R_{1}=1.02$ and $R_{2} =
0.07$. The relative errors%
\begin{equation*}
E_{j}=\frac{\left(\sum_{n=0}^{200} \left| k_{j,N_{j},M}\left(
\lambda_{n} \right)-k_{j}\left( \lambda_{n}
\right)\right|^2\right)^{\frac{1}{2}}}{\left(\sum_{n=0}^{200}\left|
k_{j}\left( \lambda_{n} \right)\right|^2\right)^{\frac{1}{2}}}
\text{ with } \lambda_{n} = 10^{-2 + \frac{n}{100}} \quad \text{for
} j=1,\ldots,4,
\end{equation*}
between the exact functions and final approximations are plotted in
logarithmic scale in Figure \ref{ellipse4} for $M$ varying from $10$
to $5 \times 10^{2}$. The errors converge exponentially with an
exponential decay rate given in Figure \ref{ellipse4}. Note that the
parameters $R_1$ and $R_2$ of the ellipse affects the rate of
convergence errors, which is confirmed by our numerical calculation.

\begin{figure}[h]
\begin{center}
{\normalsize
\scalebox{0.6}{\includegraphics*{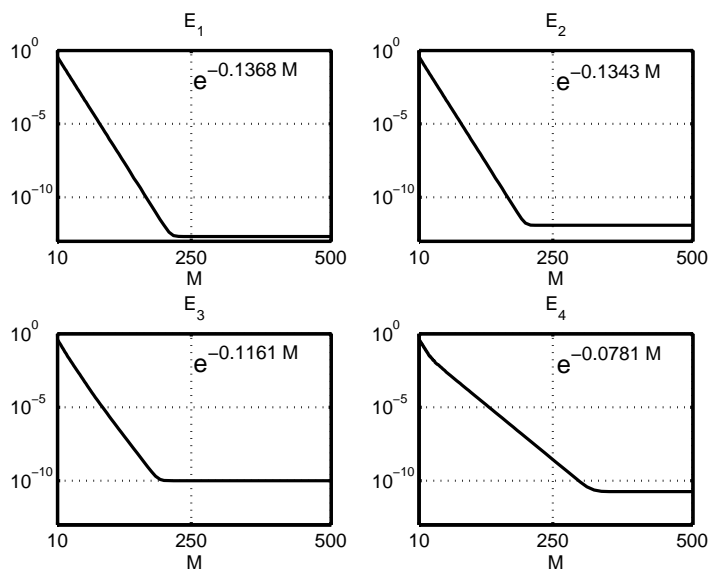}}}
\end{center}
\caption{Errors between $k$ and $k_{N,M}$} \label{ellipse4}
\end{figure}

\medskip

\noindent $\vartriangleright $ \textit{Spatial discretization:} The
approximation (\ref{app4ex4}) is obtained from the formula of
$\uu_{N,M,h}$ in (\ref{discspatialeu1}) and by using the centered
finite difference scheme
of the second order derivative $\partial _{yy}^{2}$. The expression of $%
\Lambda _{h}$ is the same as in Example 1 and the error between
$\uu_{N,M,h}$ and $\uu$ is quadratic in the space step $h$.

\section{Conclusion\label{conclusion}}

\noindent We have proposed a method to compute distributed control
applied to linear distributed systems with a control operator that
is bounded or not. It has been conceived for architectures of
semi-decentralized processors. Its construction uses a functional
calculus for matrices of functions of an operator, based on spectral
theory and Cauchy formula. In the case of polynomial approximation
of $k$, we have noticed that the numerical integration needs few
integration points, and that the radius of the contour affects the
accuracy of the numerical integration of the Cauchy formula. If the
approximation is rational, we have concluded that numerical
integration requires more integration points in the ellipse which
parameters have been chosen heuristically. We think that the
performance of the method could be further improved by finding
optimal contour parameters depending on the number of quadrature
nodes following the ideas in J. A. C. Weideman and L. N. Trefethen
\cite{WeiTre}. Finally, the method can be extended to other
frameworks for distributed control and for functional calculus.


\bibliographystyle{plain}
\bibliography{biblio1}

\end{document}